\newcommand*\linenomathpatchAMS[1]{%
  \expandafter\pretocmd\csname #1\endcsname {\linenomathAMS}{}{}%
  \expandafter\pretocmd\csname #1*\endcsname{\linenomathAMS}{}{}%
  \expandafter\apptocmd\csname end#1\endcsname {\endlinenomath}{}{}%
  \expandafter\apptocmd\csname end#1*\endcsname{\endlinenomath}{}{}%
}
  \let\linenomathAMS\linenomathWithnumbers
  \patchcmd\linenomathAMS{\advance\postdisplaypenalty\linenopenalty}{}{}{}
  \let\linenomathAMS\linenomathNonumbers
\newtheorem{theorem}{Theorem}
\newtheorem{lemma}[theorem]{Lemma}
\newtheorem{conjecture}[theorem]{Conjecture}
\theoremstyle{definition}
\newcommand\BB{\mathbb B}
\newcommand\DD{\mathbb D}
\newcommand\EE{\mathbb E}
\newcommand\RR{\mathbb R}
\newcommand\SSS{\mathbb S}
\newcommand\Sphere{\mathbb{S}^2}
\newcommand{\Xsym}          {{X}^{\rm sym}}
\newcommand{\model}         {{\mathcal M}}
\newcommand{\NDef}[1]       {{\Delta}{{#1}}}
\newcommand{\Grass}[2]      {{\mathbf{G}}{({#1},{#2})}}
\newcommand{\Linear}        {{\mathcal{T}}}
\newcommand{\diff}{\,\mathrm{d}}
\newcommand{\One}{\mathbf{1}}
\newcounter{fig}
\newcommand{\OneEmpty}[1]   {{{\mathbf{1}_{\rm empty}{[{#1}]}}}}
\newcommand{\OneFacet}[1]   {{{\mathbf{1}_{\rm facet}{[{#1}]}}}}
\newcommand{\Expected}[1]   {{{\mathbf E}{[{#1}]}}}
\newcommand{\PP}            {{{\mathbf P}}}
\newcommand{\Probable}[1]   {{{\PP}{[{#1}]}}}
\newcommand{\Bcc}[1]        {{\mathrm{Bg}{({#1})}}}
\newcommand{\Scc}[1]        {{\mathrm{Sm}{({#1})}}}
\newcommand{\Edist}[2]      {{\|{#1}-{#2}\|}}
\newcommand{\scalprod}[2]   {{\langle #1 , #2 \rangle}}
\newcommand{\Width}         {{{\rm W}}}
\newcommand{\Area}          {{{\rm A}}}
\newcommand{\Vol}           {{{\rm V}}}
\newcommand{\Length}        {{{\rm L}}}
\newcommand{\density}       {{\varrho}}
\newcommand{\conv}[1]       {{\rm conv\,}{#1}}
\newcommand{\proj}[2]       {{\rm proj}_{#1}{({#2})}}
\newcommand{\BetaF}[2]      {{{\rm B}}{({#1},{#2})}}
\newcommand{\BesselF}[1]    {{\,{\rm I}_{#1}}}
\newcommand{\Skip}[1]       {}
\title{The Beauty of Random Polytopes Inscribed in the 2-sphere}
\author{Arseniy~Akopyan}
\author{Herbert Edelsbrunner}
\author{Anton Nikitenko}
\address{Institute of Science and Technology Austria (IST Austria), Am Campus~1, 3400 Klosterneuburg, Austria}
\email{akopjan@gmail.com, edels@ist.ac.at, anton.nikitenko@ist.ac.at}
\begin{document}

\keywords{Blaschke--Petkantschin, random inscribed polytopes, Archimedes' Lemma, intrinsic volumes,
  mean width, area, volume, edge length, distance, Poisson point process,
	homeoid density, Crofton's formula}

\begin{abstract}
  Consider a random set of points on the unit sphere in $\RR^d$,
  which can be either uniformly sampled or a Poisson point process.
Its convex hull is a random inscribed polytope, whose boundary approximates the sphere.
We focus on the case $d=3$, for which there are elementary proofs
and fascinating formulas for metric properties.
In particular, we study the fraction of acute facets, the expected intrinsic volumes,
the total edge length, and the distance to a fixed point.
Finally we generalize the results to the ellipsoid with homeoid density.
\end{abstract}
\maketitle

\section{Introduction}
\label{sec:1-Introduction}

The study of random geometric structures has been an active field
of mathematics for the last several decades.
With an effort of being as general as possible,
results often end up as cumbersome formulas with multiple parameters,
sometimes being recurrent, and involving special functions.
As such, many beautiful formulas remained hidden,
despite being special cases of more general ones.
For example, the \emph{expected intrinsic volumes} of a random polytope have been
computed first in \cite{buchta1985stochastical} for \emph{spherical polytopes},
and later in \cite{Kab17} for \emph{Beta polytopes},
but the following exciting expressions for the $2$-sphere
were overlooked in the first and lost within
a large number of corollaries in the second article:
\begin{align}
  \Expected{\Width(X_n)}  &=    \Width(\BB^3) \cdot \tfrac{n-1}{n+1},
    \label{eq:intro:width}   \\
  \Expected{\Area(X_n)}   &=\;  \Area(\BB^3) \cdot \tfrac{n-1}{n+1} \tfrac{n-2}{n+2},
    \label{eq:intro:area}   \\
  \Expected{\Vol(X_n)}    &=\;  \Vol(\BB^3) \cdot \tfrac{n-1}{n+1} \tfrac{n-2}{n+2}
                                                                   \tfrac{n-3}{n+3},
    \label{eq:intro:volume}
\end{align}
in which $\Width, \Area, \Vol$ map a $3$-dimensional convex body to its mean width,
surface area, and volume;
and $X_n$ is the convex hull of $n$ points chosen uniformly at random on the $2$-sphere.
We prove a similar relation for the total edge length and extend
\eqref{eq:intro:width}, \eqref{eq:intro:area}, \eqref{eq:intro:volume}
to random centrally symmetric polytopes.
In addition, we derive the rather similar corresponding relations for
a stationary Poisson point process:
\begin{align}
  \Expected{\Width(X_\density)} &= \Width(\BB^3)
    \cdot 2 \pi \density^{0.5} e^{-2 \pi \density} {\BesselF{1.5} (2\pi\density)},
    \label{eqn:Intro-EIV-II1} \\
  \Expected{\Area(X_\density)}  &= \; \Area(\BB^3)
    \cdot 2 \pi \density^{0.5} e^{-2 \pi \density} {\BesselF{2.5} (2\pi\density)},
    \label{eqn:Intro-EIV-II2} \\
  \Expected{\Vol(X_\density)}   &= \; \Vol(\BB^3)
    \cdot 2 \pi \density^{0.5} e^{-2 \pi \density} {\BesselF{3.5} (2\pi\density)},
    \label{eqn:Intro-EIV-II3}
\end{align}
in which $\BesselF{\alpha} (x)$ is the modified Bessel function of the first kind.
The generic proofs tend to be probabilistically analytic,
hiding the beautiful geometry implied by the formulas.
An example is the \emph{Blaschke--Petkantschin type} formula for the sphere \cite{EN18},
which is sufficiently powerful to compute expectations of metric properties
of random inscribed polytopes, but the authors
overlooked its simple interpretation,
namely that for a random $p$-simplex inscribed in the $n$-sphere,
its shape and its size are independent.
A similar statement holds in Euclidean space, but this is beyond the scope
of this paper.

\medskip
All of this inspired us to study the special case of random polytopes inscribed
in the $2$-sphere,
with the aim of casting light on the geometric intuition
that works behind the scenes.
By minimizing the use of heavy machinery, we get intuitive geometric proofs
that appeal to our sense of mathematical beauty.
The results we present --- some known and some new --- tend to have inspiringly
simple form, even if we miss the deeper symmetries that govern them.

\subsection*{Outline}
{Section \ref{sec:2-Experiments_and_Motivation} motivates the study of
random inscribed polytopes with result of computational experiments
that give evidence of a strong correlation between their intrinsic volumes.
Section \ref{sec:3-Archimedes} collects geometric facts Archimedes
would have established three centuries BC if probability would have been
a subject of inquiry back then.
Section \ref{sec:4-ShapeandSize} recalls the \emph{independence of shape and size}
and uses it to prove that a random triangle bounding a random polytope inscribed
in the $2$-sphere is acute with probability $\tfrac{1}{2}$.
Section \ref{sec:5-IntrinsicVolume} uses a geometric approach to
compute the expected \emph{intrinsic volumes} of a random inscribed polytope.
We do this for the uniform distribution, for which we also consider
centrally symmetric polytopes, and for Poisson point processes.
Section \ref{sec:6-LengthandDistance} studies the \emph{total edge length}
of a random inscribed polytope---for which it proves a formula that is
surprisingly similar to \eqref{eqn:Intro-EIV-II1}
to \eqref{eqn:Intro-EIV-II3}---as well as the \emph{minimum distance}
of the vertices to a fixed point on the $2$-sphere.
providing evidence for strong correlation between the intrinsic volumes.
Section \ref{sec:7-Deficiencies} probes how far random inscribed polytopes
are from maximizing the intrinsic volumes.
Section \ref{sec:8-Ellipsoid} discusses an application to the distribution
of electrons on an ellipsoid.
Section \ref{sec:9-Discussion} concludes the paper.

\bigskip
\section{{\sc Experiments and Motivation}}
\label{sec:2-Experiments_and_Motivation}

What if we could tell all intrinsic volumes of a polytope knowing just one of them?
The experiments show that the triplets of volumes concentrate along a curve,
as we now explain.
In the subsequent sections, we will show where these curves originate from.
To begin, we show the distributions of the intrinsic volumes
of randomly generated inscribed polytopes in Figure \ref{fig:distributions}.
Considering the mean width, area, and volume, in this sequence,
we see that the normalized expectations get progressively smaller,
and the distributions get progressively wider.
\begin{figure}[ht]%
  \centering
  \includegraphics[scale=0.7]{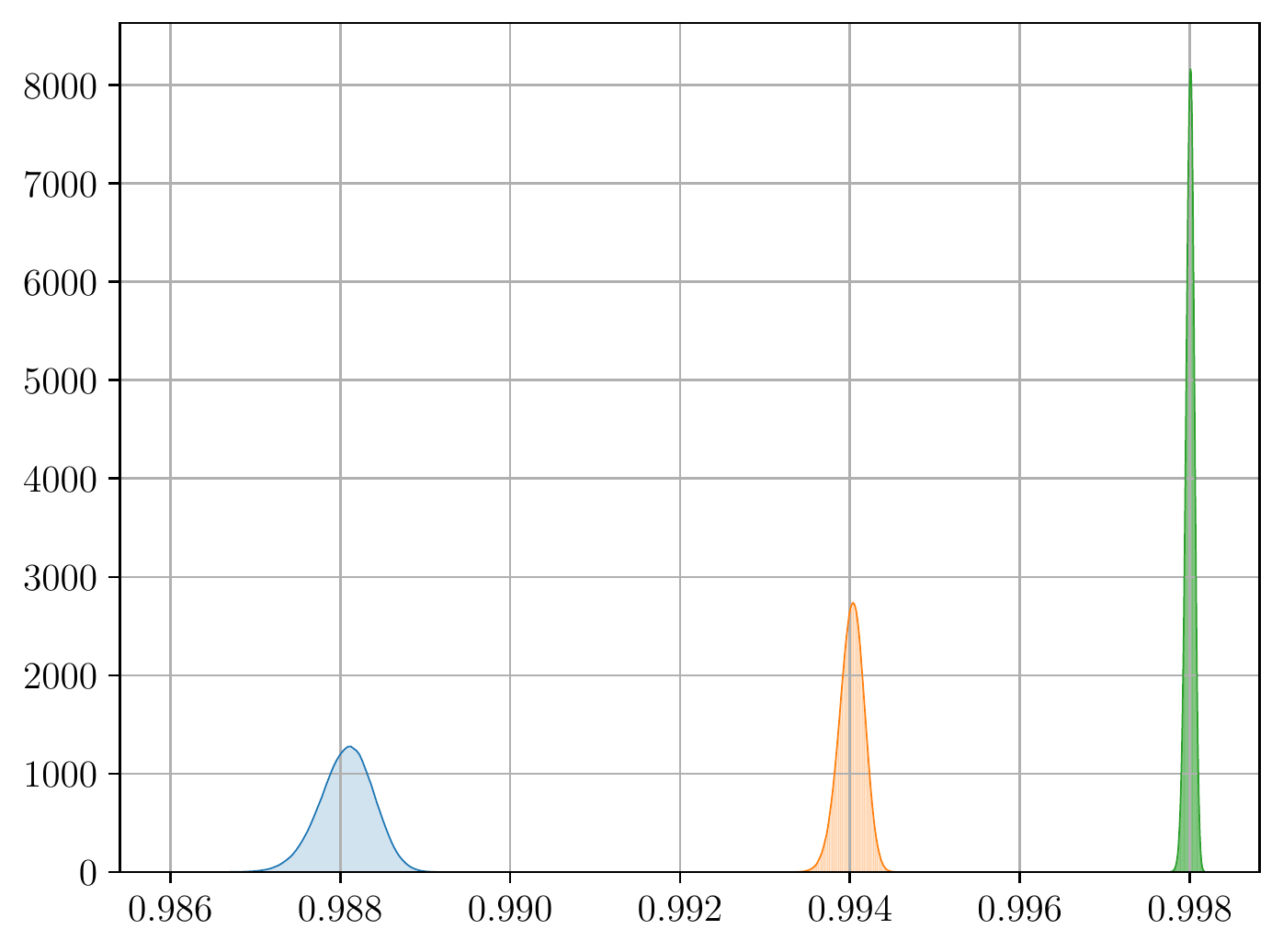}
  \caption{\footnotesize{From \emph{left} to \emph{right}: the distributions
    of the normalized mean width, area, volume of a random inscribed polytope
    with $n = 1000$ vertices in $m = 1\,000\,000$ experiments.}}
  \label{fig:distributions}%
\end{figure}
To further visualize these results, consider
the curve $\gamma \colon [3, \infty) \to \RR$ defined by
\begin{align}
  \gamma(t) &= \left( \Width(\BB^3) \, \tfrac{t-1}{t+1};
                      ~\Area(\BB^3) \, \tfrac{t-1}{t+1} \tfrac{t-2}{t+2};
		      ~\Vol(\BB^3)  \,
                         \tfrac{t-1}{t+1} \tfrac{t-2}{t+2} \tfrac{t-3}{t+3}\right),
\end{align}
and note that it maps positive integers $t = n$ to the triplets of
expected intrinsic volumes;
compare with \eqref{eq:intro:width}, \eqref{eq:intro:area}, \eqref{eq:intro:volume}.
Dropping the intrinsic volumes of the ball, we get the three normalized
expectations, which we note decrease from left to right;
compare with Figure \ref{fig:distributions}.
These inequalities generalize to the normalized intrinsic volumes of any
inscribed polytope:
\begin{align}
  \frac{\Width(X_n)}{\Width(\BB^3)}
    \geq \frac{\Area(X_n)}{\Area(\BB^3)}
    \geq \frac{\Vol(X_n)}{\Vol(\BB^3)} ,
\end{align}
no matter whether $X_n$ is chosen randomly or constructed.
The inequality between the area and the volume follows from the easy observation
that the height of every tetrahedron connecting a triangular facet to the origin
has height less than $1$.
The same argument together with the Crofton formula
applied to the planar projections proves the inequality
between the mean width and the area.

Our experiments show that the three intrinsic volumes deviate from the
expected values in a highly correlated manner.
Indeed, in Figure \ref{fig:moment-curve} we see how the intrinsic volumes
hug the graph of $\gamma$ even when they are far from the expected values.
\begin{figure}[ht]%
  \centering
  \begin{subfigure}[t]{0.45\textwidth}
    \includegraphics[scale=0.50]{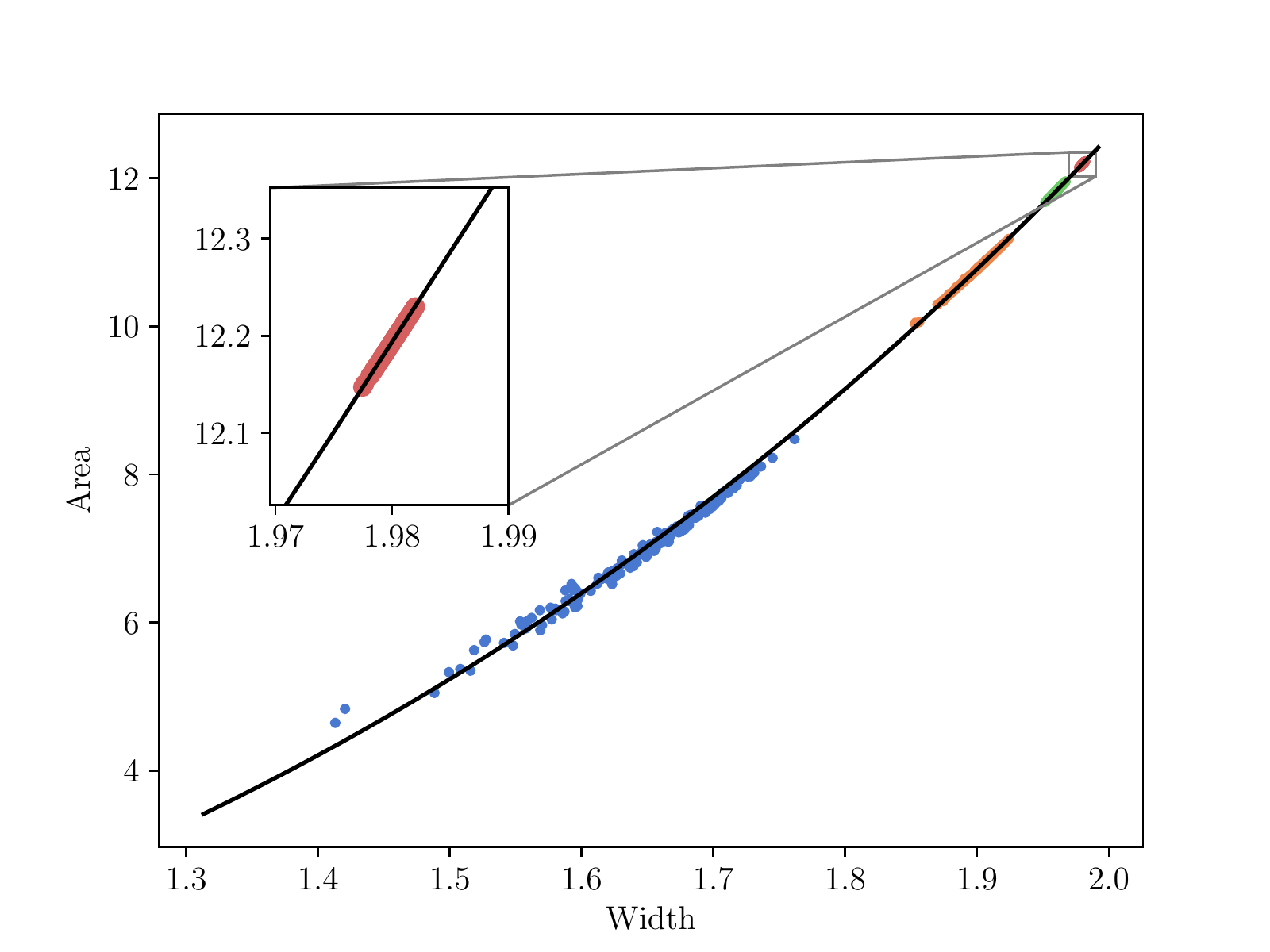}
  \end{subfigure}
  \begin{subfigure}[t]{0.45\textwidth}
    \includegraphics[scale=0.50]{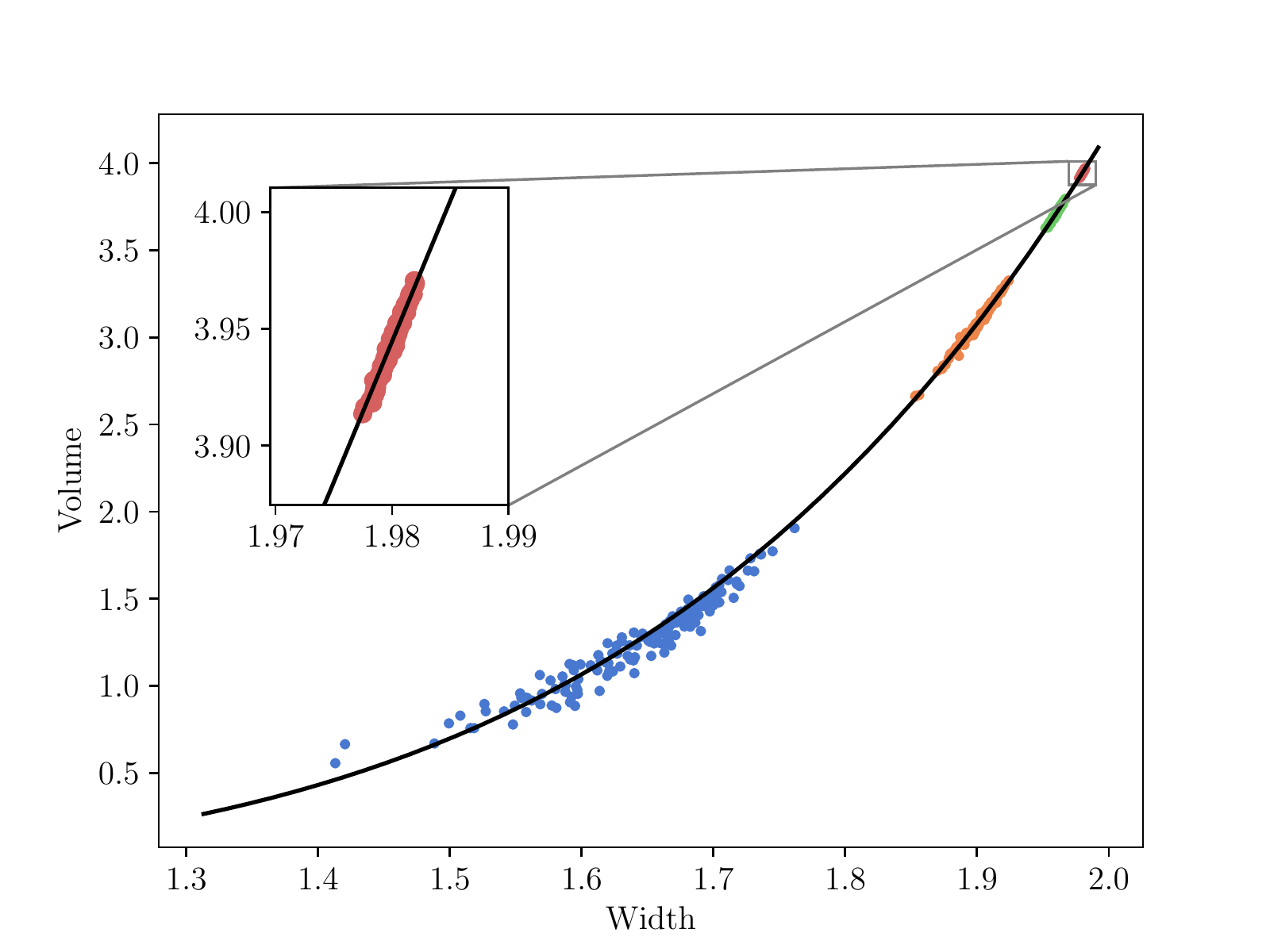}
  \end{subfigure}
  \caption{\footnotesize{Projections of the graph of $\gamma$ and the triplets of
    expected intrinsic volumes into the width-area plane on the \emph{left}
    and the width-volume plane on the \emph{right}.
    \emph{Top:} the $150$ \emph{blue}, \emph{orange}, \emph{green}, and \emph{red}
    points belong to polytopes with $10$, $40$, $100$, and $200$ vertices each.}}
  \label{fig:moment-curve}
\end{figure}
In the two panels, we see four families of random polytopes with
$10$, $40$, $100$, and $200$ vertices, respectively.
As shown in the inserts,
the surprisingly tight fit to the curve can even be observed for random
polytopes with $n = 200$ points, for which the difference
between minimum and maximum intrinsic volume is on the order of $10^{-2}$.
Given one intrinsic volume of a randomly generated polytope,
we can therefore reasonably well predict the other two.
For example, given the mean width, $w$, we can invert \eqref{eq:intro:width}
to get $n(w) = \frac{2+w}{2-w}$,
and plugging $n=n(w)$ into \eqref{eq:intro:area} and \eqref{eq:intro:volume},
we get $A(w) = 4 \pi w \frac{3w-2}{6-w}$
and $V(w) = \frac{4 \pi}{3} w \frac{3w-2}{6-w} \frac{w-1}{4-w}$
as estimates of the area and the volume.

\bigskip 
\section{Archimedes' Lemma and Implications}
\label{sec:3-Archimedes}

The classic version of Archimedes' Lemma says that the area of
a slice of width $h$ of the $2$-dimensional sphere with radius $r$ is $2\pi r h$.
Equivalently, dropping a point onto a $2$-sphere uniformly at random and then projecting
it orthogonally to a diameter is equivalent to just dropping the point uniformly
at random onto the line segment.
Similarly, we recall the concept of a \emph{stationary Poisson point process
with intensity $\density$}:
the \emph{Poisson measure} of a Borel set is defined
to be $\density$ times the Lebesgue measure of the set,
and the points are sampled in such a way that the expected number in every Borel set
is its Poisson measure; see \cite{ScWe08} for the complete definition.
We state the interpretation of Archimedes' Lemma for uniform distributions
and Poisson point processes as a lemma:
\begin{lemma}[Archimedes]
  (1) The orthogonal projection of the uniform distribution
  on $\Sphere \subseteq \RR^3$ onto any given diameter
  is the uniform distribution on this line segment.
  (2) The orthogonal projection of the stationary Poisson point process
  with intensity $\density$ on $\Sphere$ onto any given diameter
  is the stationary Poisson point process with intensity $2 \pi \density$
  on this line segment.
  \qed
\end{lemma}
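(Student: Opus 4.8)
The plan is to deduce both parts from the classical slice-area formula recalled just above, using the rotational symmetry of the sphere to fix the geometry. Since any two diameters differ by a rotation of $\RR^3$, and both the uniform distribution and the stationary Poisson point process on $\Sphere$ are invariant under rotations, I may assume without loss of generality that the given diameter is the $z$-axis. The projection is then the map $\pi\colon\Sphere\to[-1,1]$, $\pi(x,y,z)=z$, and it suffices to identify the law, respectively the intensity, of the image.

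For part (1), I would begin from the classical fact that the slice of the unit sphere between the planes $z=z_1$ and $z=z_2$ has area $2\pi(z_2-z_1)$, independent of where the slice sits. Dividing by the total surface area $4\pi$, the probability that a uniformly sampled point projects into $[z_1,z_2]$ equals $\tfrac{1}{2}(z_2-z_1)$, which is precisely the mass that the uniform distribution on $[-1,1]$ assigns to that subinterval. As these values determine the pushforward law, the projected distribution is uniform. An alternative intrinsic derivation parametrizes $\Sphere$ by height $z$ and azimuth $\phi$, under which the surface element is simply $\diff z\,\diff\phi$; since this factorizes, integrating out $\phi$ leaves the uniform law in $z$.

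For part (2), the main tool is the mapping theorem for Poisson point processes: if $\Pi$ is a Poisson process on a space with $\sigma$-finite intensity measure $\mu$ and $f$ is a measurable map whose pushforward $f_*\mu$ is atomless, then $f(\Pi)$ is again a Poisson process, with intensity $f_*\mu$. Here the intensity measure on $\Sphere$ is $\density$ times the surface-area measure, and $f=\pi$. The computation behind part (1) shows that the pushforward of the surface-area measure under $\pi$ has constant density $2\pi$ with respect to Lebesgue measure on $[-1,1]$, so the projected intensity has density $2\pi\density$, which is exactly the claimed stationary Poisson point process on the segment.

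The only genuine content is the density computation for $\pi_*$ of the surface measure, which is immediate from Archimedes' formula; everything else is bookkeeping. The one hypothesis worth checking carefully is the atomlessness required by the mapping theorem: one must confirm that the projection does not pile up positive mass at any single height. This holds because the projected surface measure is absolutely continuous with bounded density $2\pi$, so distinct points of the process almost surely remain distinct after projection and the image is a genuine simple Poisson process rather than one carrying multiplicities.
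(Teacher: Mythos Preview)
Your argument is correct. The paper does not actually supply a proof of this lemma: it is stated with a terminal \qed{} and treated as a direct restatement of the classical Archimedes hat-box theorem, so there is nothing to compare against. Your write-up fills in exactly the details one would expect---the slice-area computation for part (1) and the mapping theorem for Poisson processes for part (2), including the atomlessness check---and is the standard way to justify the statement.
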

We need a few auxiliary statements for our proofs.
We can obtain them in different ways, including direct integration,
but we prefer the more illustrative application of Archimedes' Lemma.
\begin{lemma}[Expected Projection]
  \label{lem:expected_projection}
  For a line segment with endpoints $a, b \in \RR^3$,
  the expected length of the orthogonal projection onto a
  random direction is half the distance between $a$ and $b$.
\end{lemma}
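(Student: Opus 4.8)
The plan is to turn the statement into a one-dimensional integral by exploiting symmetry, and then to evaluate that integral in a single line using part (1) of Archimedes' Lemma. Write $v = b - a$, so that $\Edist{a}{b} = \|v\|$, and let $u$ denote a uniformly random unit vector on $\Sphere$ --- this is what ``random direction'' means. The orthogonal projection of the segment $[a,b]$ onto the line spanned by $u$ has length $|\scalprod{v}{u}|$, so the quantity we must compute is the expectation $\Expected{|\scalprod{v}{u}|}$ taken over $u$.

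First I would use the rotational invariance of the uniform distribution on $\Sphere$. Changing variables $u \mapsto R^{-1}u$ for a rotation $R$ leaves the distribution of $u$ unchanged, so $\Expected{|\scalprod{v}{u}|} = \Expected{|\scalprod{Rv}{u}|}$ for every $R$. Choosing $R$ to align $v$ with a fixed unit vector $e$, i.e.\ $Rv = \|v\|\,e$, gives
\begin{align}
  \Expected{|\scalprod{v}{u}|} = \Edist{a}{b} \cdot \Expected{|\scalprod{e}{u}|}.
\end{align}
It therefore suffices to prove that $\Expected{|\scalprod{e}{u}|} = \tfrac12$ for an arbitrary fixed unit vector $e$.

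This is exactly where Archimedes' Lemma applies. The scalar $\scalprod{e}{u}$ is the orthogonal projection of the random \emph{point} $u \in \Sphere$ onto the diameter in direction $e$, so by part (1) of the lemma it is uniformly distributed on $[-1,1]$. Consequently
\begin{align}
  \Expected{|\scalprod{e}{u}|} = \int_{-1}^{1} |t| \, \tfrac12 \diff t = \tfrac12,
\end{align}
and combining this with the displayed reduction yields $\Expected{|\scalprod{v}{u}|} = \tfrac12 \Edist{a}{b}$, as claimed.

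I do not anticipate a genuine obstacle; the only step requiring a moment's care is the conceptual ``role reversal'' in the last paragraph. Archimedes' Lemma is a statement about projecting a random point of the sphere, whereas the lemma as stated projects a fixed segment onto a random direction. The rotational-symmetry step is precisely what lets us swap these viewpoints: after reducing to the fixed direction $e$, it is the random unit vector $u$ that plays the role of the point on $\Sphere$ being projected. Once this identification is made explicit, the remainder is the routine one-line integral above.
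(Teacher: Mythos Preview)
Your proof is correct and follows essentially the same approach as the paper: both reduce to computing $\Expected{|\scalprod{e}{u}|}$ for a fixed unit vector $e$ and a uniformly random $u \in \Sphere$, then invoke Archimedes' Lemma to see that $\scalprod{e}{u}$ is uniform on $[-1,1]$, giving expectation $\tfrac{1}{2}$. The paper phrases the reduction via the symmetry of the inner product (swapping which vector is ``fixed'' and which is ``random'') rather than via an explicit rotation, but this is exactly the role-reversal you describe in your final paragraph.
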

\begin{proof}
  Assume without loss of generality that $a = 0$ is the origin of $\RR^3$,
  and $b$ has unit distance from $a$.
  The orthogonal projection of the connecting line segment onto
  a random direction with unit vector $e$ has length $\scalprod{b-a}{e}$,
  which is also the length of the orthogonal projection of $e$ onto the
  direction of $b-a$.
  Thus the average length of the projection is the distance to $0$
  of the projection of a random point on the unit sphere onto $b-a$.
  By Archimedes' Lemma, the projection is uniform, so the expected distance
  is $\tfrac{1}{2}$ or, in the general case, half the length of $b-a$.
\end{proof}

The next lemma uses the previous one to get the expected length of a random chord.
\begin{lemma}[Expected Distance]
  \label{lem:expected_distance}
  The expected Euclidean distance between two uniformly and independently
  chosen points on the unit sphere in $\RR^3$ is $4/3$.
\end{lemma}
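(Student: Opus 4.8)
The plan is to reduce the expected chord length to a one-dimensional computation by applying the Expected Projection Lemma (Lemma \ref{lem:expected_projection}) to the random chord itself. Let $X$ and $Y$ be the two independent uniform points on $\Sphere$. Applied to the (now random) segment with endpoints $X$ and $Y$, that lemma says
\[
  \Edist{X}{Y} = 2\,\Expected{\,|\scalprod{X-Y}{e}|\,},
\]
where the expectation is over a direction $e$ chosen uniformly at random and independently of $X,Y$, and where I have used that the projected length of the segment onto the line in direction $e$ equals $|\scalprod{X-Y}{e}|$. Taking the expectation over $X$ and $Y$ as well and interchanging the order of integration (Fubini), I obtain $\Expected{\Edist{X}{Y}} = 2\,\Expected{\,|\scalprod{X-Y}{e}|\,}$ with all three of $X$, $Y$, $e$ now random.

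Next I would exploit rotational symmetry. Since the joint distribution of $(X,Y)$ is invariant under rotations, the quantity $\Expected{\,|\scalprod{X-Y}{e}|\,}$ does not depend on the direction $e$, so I may fix $e$ to be a coordinate axis. With $e$ fixed, $\scalprod{X}{e}$ and $\scalprod{Y}{e}$ are the orthogonal projections of $X$ and $Y$ onto the diameter in direction $e$; by Archimedes' Lemma each is uniformly distributed on $[-1,1]$, and they are independent because $X$ and $Y$ are. Writing $U=\scalprod{X}{e}$ and $V=\scalprod{Y}{e}$, the problem collapses to computing $\Expected{\,|U-V|\,}$ for two independent uniform variables on $[-1,1]$.

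Finally, this last expectation is a routine double integral giving $\Expected{\,|U-V|\,} = \tfrac{2}{3}$ (more generally, the mean absolute difference of two independent uniforms on an interval of length $L$ is $L/3$). Substituting back yields $\Expected{\Edist{X}{Y}} = 2\cdot\tfrac{2}{3} = \tfrac{4}{3}$, as claimed.

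The only genuinely delicate point is the first step: the Expected Projection Lemma is stated for a \emph{fixed} segment, so I must check that applying it pointwise in $X,Y$ and then averaging is legitimate. This is exactly Fubini together with the independence of $e$ from $(X,Y)$, and it also relies on using the ``switch of roles'' in that lemma consistently (projecting the segment onto a random direction has the same expected length as projecting a random unit vector onto the segment's direction). Once the threefold expectation is set up, rotational symmetry and Archimedes' Lemma do all the remaining work and the computation is elementary.
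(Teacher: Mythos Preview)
Your proof is correct and follows essentially the same approach as the paper: both combine Archimedes' Lemma (to see that the projections onto a diameter are independent uniforms on $[-1,1]$), the Expected Projection Lemma, and Fubini to reduce the expected chord length to twice the expected distance between two independent uniform points on $[-1,1]$. The only difference is the order of presentation---the paper projects first and invokes the projection lemma last, while you invoke the projection lemma first and project afterward---but the ingredients and the logic are the same.
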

\begin{proof}
  Call the points $a, b \in \Sphere$ and project them orthogonally onto
  a fixed diameter of the sphere.
  By Archimedes' Lemma, the projections are uniformly distributed on this diameter.
  The expected distance between two uniformly and independently chosen points
  on a line segment is one third of the length of the segment.
  This is easy to see, either by direct computation,
  or by gluing the ends of the segment and noticing that the experiment
  is equivalent to dropping three points onto a circle.
  Thus, the expected distance between the projection of $a$ and $b$ is $\tfrac{2}{3}$.
  Averaging over all diameters and applying Fubini's Theorem,
  we get that $\tfrac{2}{3}$ is half of the expected Euclidean distance
  between $a$ and $b$ by Lemma \ref{lem:expected_projection}.
\end{proof}

Consider three points dropped uniformly and independently onto
the unit circle $\SSS^1 \subseteq \RR^2$.
The probability that the triangle defined by the points is acute is $\tfrac{1}{4}$.
Perhaps the simplest argument was provided by Wendel \cite{Wen62}:
the central reflection of the points through the center of the circle
preserves the measure, and for each triple of points, two of the eight
possible reflections of a triangle (picking a point or its reflection) contain the center of the circle.
This argument does not generalize to triangles in higher dimensions:
central reflection through the center of the circumcircle no longer
preserves the measure.
Indeed, for triangles with vertices on $\Sphere$ the situation is already different.
\begin{lemma}[Acute Triangle]
  \label{lem:acute_triangle}
  The Euclidean triangle formed by three uniformly and independently
  chosen points on $\Sphere \subseteq \RR^3$ is acute with probability $\tfrac{1}{2}$.
\end{lemma}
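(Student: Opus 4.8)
The plan is to reduce the question to the geometry of the triangle's circumcircle, i.e.\ the small circle in which the plane through the three points meets $\Sphere$. Writing $d$ for the distance from the origin to this plane and $r=\sqrt{1-d^2}$ for the circumradius, I would first record the elementary fact that a triangle inscribed in a circle is acute exactly when its circumcenter lies inside it, equivalently when the three arcs $\alpha,\beta,\gamma$ cut out by the vertices (so $\alpha+\beta+\gamma=2\pi$) are each smaller than $\pi$; the inscribed angle opposite an arc exceeding $\pi$ is the unique obtuse angle, and since the arcs sum to $2\pi$ at most one can exceed $\pi$. Acuteness is therefore a property of the \emph{shape} of the triple alone and does not see the circumradius.

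The heart of the argument is to determine the law of the three points along their circumcircle. Here I would invoke the independence of shape and size, in the form of the spherical Blaschke--Petkantschin decomposition of \cite{EN18}: conditioning on the plane, the joint density of the angular positions $\phi_1,\phi_2,\phi_3$ on the circle is proportional to the Jacobian of the map sending $(\text{plane},\phi_1,\phi_2,\phi_3)$ to the three points on $\Sphere$. A direct computation of this Jacobian at a horizontal plane---perturbing it by a height $d$ and a slope $(s_x,s_y)$ and using the implicit relation $z=d+\sqrt{1-z^2}\,(s_x\cos\phi+s_y\sin\phi)$ on the sphere---collapses to the determinant of the matrix with rows $(1,\;r\cos\phi_i,\;r\sin\phi_i)$, whose absolute value is twice the area of the triangle. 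Thus the conditional density is proportional to the triangle's area, and since that area equals $r^2$ times the area of the correspondingly-angled triangle inscribed in the unit circle, the shape law is $\propto\,|\sin\alpha+\sin\beta+\sin\gamma|$ and is the same for every $r$. This is precisely the independence of shape and size: shape is area-weighted rather than uniform on the circle, which is exactly why the answer will differ from Wendel's $\tfrac14$.

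With the shape law in hand I would pass to arc coordinates. For uniform angles the spacings $(\alpha,\beta,\gamma)$ are uniform on the simplex $\{\alpha+\beta+\gamma=2\pi\}$, so after the area-weighting the probability of an acute triangle is the ratio
\[
  \Probable{\text{acute}}=\frac{\int_{\alpha,\beta,\gamma<\pi}(\sin\alpha+\sin\beta+\sin\gamma)}{\int_{\text{simplex}}(\sin\alpha+\sin\beta+\sin\gamma)} .
\]
By the threefold symmetry of both the region and the integrand each integral reduces to $3\int\sin\alpha$, and the resulting one-dimensional integrals evaluate to $3\pi$ and $6\pi$, giving $\tfrac12$. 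The only real obstacle is the middle step: getting the area weight correct. Everything rests on the Jacobian collapsing to twice the triangle area, so I would either cite the spherical Blaschke--Petkantschin formula directly or carry out the short implicit-function computation above; once the weight $\propto$ area is in place, the final integration is routine and the clean value $\tfrac12$ drops out.
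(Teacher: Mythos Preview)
Your argument is correct, and the integrals do collapse to $3\pi/6\pi=\tfrac12$ as you claim. But your route is genuinely different from the paper's. You condition on the circumcircle and invoke the spherical Blaschke--Petkantschin decomposition to identify the shape law as area-weighted, then integrate over arc-lengths. The paper instead gives a completely elementary argument that needs nothing beyond Archimedes' Lemma: fix $a,b$ and observe that $\angle bac$ is obtuse iff $c$ lands in the cap cut off by the plane through $a$ with normal $b-a$; that cap has height $1-\tfrac12\Edist{a}{b}$, so by Archimedes its area fraction is $\tfrac12-\tfrac14\Edist{a}{b}$, and plugging in $\Expected{\Edist{a}{b}}=\tfrac43$ from Lemma~\ref{lem:expected_distance} gives $\tfrac16$ for each vertex, hence $\tfrac12$ for the triangle to be obtuse.

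The trade-off is clear. The paper's proof is self-contained and in keeping with its stated aim of Archimedean, pre-integral-geometry arguments; in fact the lemma is proved \emph{before} the shape--size independence is ever stated, and is later combined with that independence to get Theorem~\ref{thm:random_triangle}. Your approach essentially runs this logic in reverse: you import the content of Lemma~\ref{lem:shape_vs_size} to prove Lemma~\ref{lem:acute_triangle}. What you gain is a transparent explanation of \emph{why} the answer is $\tfrac12$ rather than Wendel's $\tfrac14$---the area weight doubles the mass on the acute region---at the cost of relying on the Blaschke--Petkantschin Jacobian, which the paper deliberately avoids here.
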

\begin{proof}
  Let $a$, $b$, $c$ be the three vertices of the triangle.
  Since at most one angle of a triangle can be obtuse,
  it suffices to show that the angle at $a$
  is obtuse with probability $\tfrac{1}{6}$. 
  For any two points $a$ and $b$ on the sphere, the angle $\angle bac$ is obtuse
  if and only if the plane passing through the point $a$ and perpendicular to $b-a$
  separates $b$ and $c$; see the shaded area in Figure \ref{fig:angle}.
  The desired probability can thus be written as
  $\Expected{\One_{c \in \text{shaded region}}}$,
  which, after integrating $c$ out, is equal to the expected fraction
  of the area of the cap bounded by the plane.
  Archimedes' Lemma asserts, that this fraction is the ratio of the height of the cap
  to the diameter.
  The height equals $1 - \tfrac{\Edist{a}{b}}{2}$,
  so the ratio is $\tfrac{1}{2} - \tfrac{\Edist{a}{b}}{4}$.
  By Lemma \ref{lem:expected_distance}, the expected value of
  this ratio is $\tfrac{1}{6}$, which completes the proof.
\end{proof}	
\begin{figure}[ht]
  \includegraphics{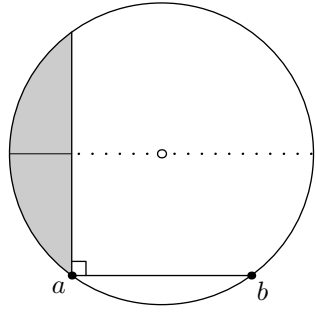}
  \caption{\footnotesize{The angle at vertex $a$ is obtuse iff $c$ lies in
    the shaded area beyond the plane with normal $b-a$ that passes through $a$.}}
  \label{fig:angle}
\end{figure}

\bigskip 
\section{Shape and Size}
\label{sec:4-ShapeandSize}

Now choose $n \geq 4$ points uniformly and independently on the sphere
and take their Euclidean convex hull.
With probability $1$, the points are in general position,
implying that the convex hull is a simplicial polytope.
The Euler formula and the integral geometric properties of the distribution
of such random polytopes facilitate the extension of Lemma \ref{lem:acute_triangle}.

\subsection{{\sc {Shape vs.\ Size}}}
\label{sec:41-ShapevsSize}

We begin with a corollary of the spherical Blaschke--Petkantschin formula
in \cite[Equation (2.1)]{EN18}, but see also \cite[Theorem 7]{Nik19},
namely that the shape of a random inscribed simplex is independent of its size.
\begin{lemma}[Shape vs.\ Size]
  \label{lem:shape_vs_size}
  Let $n \leq d$ points be uniformly and independently chosen from
  $\SSS^{d-1} \subseteq \RR^d$,
  which almost surely form an $(n-1)$-simplex and thus define a unique
  $(n - 2)$-dimensional circumsphere.
  Then the radius of this sphere is independent of the \emph{shape}
  of the simplex, i.e., the simplex scaled to unit circumradius.
\end{lemma}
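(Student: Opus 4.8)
The plan is to recall the precise form of the spherical Blaschke--Petkantschin formula from \cite[Equation (2.1)]{EN18} and to read off the independence directly from the way its Jacobian factorizes. First I would fix the change of variables it governs. A generic $n$-tuple $(x_1,\dots,x_n)\in(\SSS^{d-1})^n$ determines its affine hull $E=\mathrm{aff}(x_1,\dots,x_n)$, an $(n-1)$-flat meeting $\SSS^{d-1}$ in the circumsphere $S=E\cap\SSS^{d-1}$ with some center $c$ and radius $r\in(0,1)$, and I would record the normalized vertices $\hat x_i=(x_i-c)/r$, which lie on the unit $(n-2)$-sphere inside the linear space $E-c$ and constitute exactly the \emph{shape}. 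The map $(x_1,\dots,x_n)\mapsto(E;\hat x_1,\dots,\hat x_n)$ is the coordinate system in which I want to exhibit the density.

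In these coordinates the formula rewrites the uniform law on $(\SSS^{d-1})^n$ as a product: an integration over the flats $E$ against their invariant density, and, for each $E$, an integration of the $\hat x_i$ over the unit subsphere with a weight equal to a fixed power of the simplex volume. The essential point, which I would make the heart of the argument, is that every geometric quantity appearing in the Jacobian is \emph{homogeneous in $r$}. Indeed, the simplex with vertices $x_i=c+r\hat x_i$ is just $r$ times the unit-circumradius simplex with vertices $\hat x_i$, so its $(n-1)$-volume equals $r^{\,n-1}$ times the volume $\Delta_{\mathrm{shape}}$ of the shape. Consequently the Jacobian splits as a power of $r$ times a factor depending on the $\hat x_i$ alone, and the invariant density of $E$ itself splits into a rotational part and a radial part. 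I would collect these pieces and write the joint density of $(r;\hat x_1,\dots,\hat x_n)$ as a genuine product $g(r)\cdot\psi(\hat x_1,\dots,\hat x_n)$.

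Once this factorization is in place the conclusion is immediate: by the standard criterion that a product density entails independence, the circumradius $r$ is independent of $(\hat x_1,\dots,\hat x_n)$, hence of any function of them, and in particular of the shape; its distribution therefore does not change once we condition on the shape. The orientation of $E$ (its direction in the Grassmannian together with the unit normal fixing $c$) enters only the rotational factor; since the whole setup is $SO(d)$-invariant this orientation is uniform and independent of both $r$ and the shape, so it may be integrated out or fixed by a frame convention without affecting the claim.

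The step I expect to be the main obstacle is verifying that the Jacobian contains \emph{no} mixed term coupling $r$ to the shape---that after pulling out the power of $r$ dictated by homogeneity, what remains truly depends on the $\hat x_i$ only. Conceptually this is forced by scale covariance, but to make it airtight I would match the exponents in \cite[Equation (2.1)]{EN18} against the scaling $\Delta=r^{\,n-1}\Delta_{\mathrm{shape}}$ and check that the residual powers of $r$ (including those from the flat's invariant density and from the change between the measure on $S$ and the unit-sphere measure) collect cleanly into the single radial factor $g(r)$. This bookkeeping is routine given the cited formula, and it is the only place where the explicit constants matter.
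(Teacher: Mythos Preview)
Your proposal is correct and follows essentially the same route as the paper: both invoke the spherical Blaschke--Petkantschin formula of \cite{EN18}, observe that the resulting density factorizes into a radial part depending only on $r$ and a shape part depending only on the normalized vertices $u_i=(a_i-z)/r$, and conclude independence from the product form. The only minor difference is that the paper quotes the formula directly in its already-factorized form (with the simplex volume written in terms of the $u_i$ and an explicit radial measure $g_{n,d}(r)$), so the homogeneity bookkeeping you plan---pulling out powers of $r$ from the Jacobian and checking no mixed term survives---is unnecessary once the cited formula is taken as given.
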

\begin{proof}
  The spherical Blaschke--Petkantschin formula gives a decomposition
  of the measure on $n$-tuples of points on the sphere.
  Let $a_1, a_2, \ldots, a_n \in \SSS^{d-1}$,
  write $z$ and $r$ for the center and the radius of the
  $(n-2)$-dimensional circumsphere,
  and define $u_i = (a_i - z) / r$ for $1 \leq i \leq n$.
  Ignoring constant factors, the formula is
  \begin{align}
    \label{eqn:bp-small}
    \diff \PP_{d}(a_1, a_2, \ldots, a_n) &=
      c \cdot \mathrm{Vol}_{n-1}^{d-n+1} (\conv{\{u_1, u_2, \ldots, u_n\}})
      \diff \PP_{n-1}(u_1, u_2, \ldots, u_n) \otimes \diff g_{n, d}(r).
  \end{align}
  On the left, we have the measure $\PP_{d}$ on $n$-tuples
  of points on $\SSS^{d-1}$, and on the right the measure $\PP_{n-1}$
  on $n$-tuples of points on $\SSS^{n-2}$.
  Further, $g_{n, d}$ is a relatively complicated but explicit measure
  on the real line,
  and $\mathrm{Vol}_{n-1}$ denotes the $(n-1)$-dimensional volume of the simplex.
  Since $r$ and the $u_i$ appear in different factors,
  the distributions are independent.
\end{proof}

More precisely, the lemma states that the conditional probability of seeing a simplex,
conditioned on its circumsphere, is proportional to some power of its volume.
This implies that the distribution on the circle, induced by restricting the uniformly random triangle on $\Sphere$
to its circumcircle, is not uniform.
In particular, the conditional probability of an acute triangle
equals the probability of an acute triangle in $\Sphere$,
which is $1/2$ and thus double the probability for picking the vertices uniformly
along the circle.

\subsection{{\sc {Random Triangles}}}
\label{sec:42-RandomTriangles}

With this observation, we are ready to generalize Lemma \ref{lem:acute_triangle}.
\begin{theorem}[Random Triangle]
  \label{thm:random_triangle}
  Let $n \geq 3$ points be chosen uniformly and independently on the unit sphere
  in $\RR^3$, and let $X_n$ be their convex hull.
  Then a uniformly chosen random facet of $X_n$ is an acute triangle
  with probability $\tfrac{1}{2}$.
\end{theorem}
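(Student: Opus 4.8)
The plan is to turn the question about a random facet into a question about an expected \emph{count}, using the fact that for a simplicial $3$-polytope the number of facets is fixed. First I would note that, almost surely, the $n$ points are in general position and, since every point of $\Sphere$ is extreme, all of them are vertices of $X_n$; thus $X_n$ is a simplicial polytope with $n$ vertices. Euler's formula $V-E+F=2$ together with the simplicial identity $2E = 3F$ (each facet is a triangle, each edge lies in two facets) then forces $F = 2n-4$ facets, \emph{deterministically}. Because the denominator is constant, the probability that a uniformly chosen facet is acute equals $\Expected{N_{\mathrm{ac}}}/(2n-4)$, where $N_{\mathrm{ac}}$ is the number of acute facets, and it suffices to prove $\Expected{N_{\mathrm{ac}}} = n-2$.

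Next I would expand $N_{\mathrm{ac}}$ by linearity. For a triple $T$ of the sampled points, let $F(T)$ indicate that $T$ spans a facet of $X_n$ and $A(T)$ indicate that the triangle on $T$ is acute, so that $N_{\mathrm{ac}} = \sum_T A(T)\,F(T)$. By exchangeability of the points, $\Expected{N_{\mathrm{ac}}} = \binom{n}{3}\,\Expected{A(T_0)\,F(T_0)}$ for one fixed triple $T_0 = \{a,b,c\}$. The crux is that $A(T_0)$ and $F(T_0)$ are independent. Indeed, $A(T_0)$ depends on $T_0$ only through its \emph{shape} (its similarity class), whereas $F(T_0)$ --- the event that the remaining $n-3$ points all lie on one side of the plane through $a,b,c$ --- depends on $T_0$ only through that plane, i.e.\ through the circumcircle of $T_0$, together with the other points.

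To make this precise I would argue that the shape of $T_0$ is independent of the pair (circumcircle of $T_0$, remaining points). The remaining points are independent of $T_0$ outright. Within $T_0$, Lemma \ref{lem:shape_vs_size} gives that the shape is independent of the circumradius, and rotational invariance of the uniform measure removes any dependence of the shape on the position and orientation of the circumscribing plane; hence the shape is independent of the full circumcircle. These two facts give $A(T_0)\perp F(T_0)$, and Lemma \ref{lem:acute_triangle} yields $\Expected{A(T_0)} = \tfrac12$. Therefore $\Expected{A(T_0)\,F(T_0)} = \tfrac12\,\Probable{T_0 \text{ is a facet}}$, and summing over triples gives $\Expected{N_{\mathrm{ac}}} = \tfrac12\,\Expected{N} = \tfrac12(2n-4) = n-2$, as required.

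The step I expect to be the main obstacle is the independence argument: one must carefully separate the two roles a triple plays --- its shape, which governs acuteness, and its affine span/circumcircle, which, together with the other points, governs whether it is a facet --- and verify that Lemma \ref{lem:shape_vs_size} together with rotational symmetry fully decouples shape from circumcircle. The other essential, and easily overlooked, ingredient is the observation that Euler's formula makes the facet count deterministic, which is exactly what converts the awkward expectation of a ratio into a clean ratio of expectations.
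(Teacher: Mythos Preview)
Your proposal is correct and follows essentially the same route as the paper's proof: both hinge on Euler's formula making the facet count deterministic (so the ratio becomes a ratio of expectations), and on Lemma~\ref{lem:shape_vs_size} together with rotational symmetry to decouple acuteness (a shape property) from the facet event (which depends only on the circumcircle and the remaining points), after which Lemma~\ref{lem:acute_triangle} finishes. Your write-up is, if anything, more explicit than the paper's about the expected-count reformulation and about why the shape is independent of the \emph{full} circumcircle rather than just the circumradius; the only small omission is that the case $n=3$ should be noted separately, since there the polytope is degenerate and the claim reduces directly to Lemma~\ref{lem:acute_triangle}.
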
	

\begin{proof}
  The case $n=3$ has already been proved, so we assume $n \geq 4$.
  By Euler's formula, all simplicial polytopes with $n$ vertices
  have the same number of facets, namely $f = 2n - 4$.
  We choose any three points on the sphere and condition on the event
  that the polytope has these three points as vertices.
  The probability, that the points form a facet of the random polytope
  depends only on the circumradius of the triangle spanned by them.
  Indeed, the requirement is equivalent to having all other points
  contained in only one of the two spherical caps determined by the triangle,
  and the probability of this event is a function of the areas of the caps,
  which in turn are functions of the circumradius.
  Further, the probability that a given facet of a polytope is the chosen one
  is~$\tfrac{1}{f}$, which is a constant.
  By the previous lemma, the circumradius is independent of whether or not
  the triangle is acute.
  Also, being acute or obtuse is clearly independent of the position of the other points.
  These independencies allow us to conclude that being a facet of
  a random inscribed polytope is independent of being acute,
  so Lemma \ref{lem:acute_triangle} implies that the probability
  of being acute is indeed~$\tfrac{1}{2}$.
\end{proof}

This theorem is aligned with the previous work on the topic.
Miles in \cite{Mil70} showed that for a Delaunay triangulation
--- which is the Euclidean space analogue of the convex hull ---
of a Poisson point process in the plane, half of the triangles are acute ergodically.
This has been transferred to the identical limiting statement for a sphere in \cite{EN18}.
The current theorem removes the asymptotic limit from the statement,
showing that the behavior is the same for a finite number of points.

\subsection{{\sc {Measure of Facets}}}
\label{sec:43-MeasureofFacets}

Another way of looking at the Blaschke--Petkantschin formula
gives an interpretation of the measure on the facets of
a random inscribed polytope.
Three points, $\theta$, define a facet
if all other sampled points lie on one side of the plane spanned by $\theta$.
With probability $1$, this plane splits the sphere into the two unequal \emph{caps}:
the \emph{small circumcap}, $\Scc{\theta}$,
on the side of the plane that does not contain the center of the sphere,
and the \emph{big circumcap}, $\Bcc{\theta}$, on the other side of the plane.
Call $\theta$ and the facet it defines \emph{small} or \emph{big},
depending which of the two caps is empty.
Only one of them can be empty, unless $n = 3$,
in which case the triangle is double-covered,
by one big and one small facet.
Let $\Theta = (\Sphere)^3$ be the set of ordered triangles in $\Sphere$,
and let $f_{sm}$ be the intensity measure on $\Theta$ of small facets.
According to the Blaschke--Petkantschin formula,
it is absolutely continuous with respect to the Lebesgue measure on $\Theta$:
\begin{align}
  \diff f_{sm}(\theta) 
    &=  \left(\frac{\Area(\Bcc{\theta})}{\Area(\Sphere)}\right)^{n-3}
        \frac{n(n-1)(n-2)}{\Area(\Sphere)^3} \diff \theta.
\end{align}
An analogous formula holds for $f_{bg}$, the intensity measure of big facets.
The area of the circumcap of $\theta$ depends only on the circumradius,
so the spherical Blaschke--Petkantschin formula gives a representation
of this measure as a product of measures like in \eqref{eqn:bp-small}:
$\diff f_{sm}(\theta) = f_1(r) \diff r \, f_2(s) \diff s$,
in which $f_1(r)$ is the distribution of the circumradius of a random triangle,
and $f_2(s)$ is the distribution of its shape.
This decomposition is useful in computing the expectation of any quantity
that depends on the shape and the radius in a multiplicative way,
such as the area, the volume, the total edge length, etc.
As an example,
writing $h(r) = \sqrt{1-r^2}$ for the height of the pyramid over the facet,
we get the volume of the polytope and its expectation:
\begin{align}
  \Vol(X_n)  &=  \sum\nolimits_{\theta\text{ small}}
                   \tfrac{1}{3} \Area(s) \, r^2  h(r)
               - \sum\nolimits_{\theta\text{ big}}
                   \tfrac{1}{3}\Area(s) \, r^2  h(r), \\
  \Expected{\Vol(X_n)}  &=  \tfrac{1}{3!}
    \int\nolimits_{\theta \in \Theta} \tfrac{1}{3} \Area(s)
        \, r^2 h(r) \; [\diff f_{sm}(\theta) - \diff f_{bg}(\theta)],
  \label{eqn:volume_expectation}
\end{align}
where the Blaschke--Petkantschin decomposition can be applied to compute the integral.
Note that the same observation applies for the Poisson case,
and it generalizes to any dimension.
We refer to the proof of Theorem \ref{thm:total_edge_length} as an application
of this viewpoint.

\bigskip 
\section{Intrinsic Volumes}
\label{sec:5-IntrinsicVolume}

This section is devoted to the expected intrinsic volumes
of a random polytope inscribed in $\Sphere$.
The recurrent integral expressions for these quantities have been computed in the uniform case
for a random convex hull inside a ball \cite{BuMue84},
they have been extended for the spherical case in \cite{buchta1985stochastical},
and the asymptotic was established in \cite{Muel90}.
Integral expressions for the Poisson case were developed in~\cite{Tem19}.

\subsection{{\sc {Uniform Distribution}}}
\label{sec:51-Uniform}

We give precise formulas for $n$ points sampled uniformly at random on $\Sphere$
and notice the special relation of the intrinsic volumes of their convex hull
to the intrinsic volumes of the ball.
We present proofs based on Crofton's formula and mention that the general approach
outlined in Section \ref{sec:43-MeasureofFacets} could also be used; see \cite{Kab17}.
We begin with the mean width.
\begin{theorem}[Mean Width I]
  \label{thm:MW_I}
  Let $n \geq 1$ points be chosen uniformly and independently on the unit sphere
  in $\RR^3$, and let $X_n$ be their convex hull. Then the expected mean width of $X_n$ is
  \begin{align}
    \Expected{\Width(X_n)}  &=  \Width(\BB^3) \cdot \tfrac{n-1}{n+1},
      \label{eqn:Thm-EIV-I1}
  \end{align}
  in which $\Width(\BB^3) = 2$ is the mean width of the unit ball.
\end{theorem}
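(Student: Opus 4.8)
The plan is to reduce the three-dimensional problem to a one-dimensional order-statistics computation via Archimedes' Lemma. I would start from the integral-geometric (Crofton/Cauchy-type) representation of the mean width: for a convex body $K \subseteq \RR^3$, its mean width equals the average over all directions $e \in \Sphere$ of the width of $K$ in direction $e$, i.e. the length of the orthogonal projection of $K$ onto the line spanned by $e$. This is the representation the section advertises, and it already gives $\Width(\BB^3) = 2$, since every one-dimensional projection of the unit ball is a segment of length $2$. Applying this to $X_n$ and taking expectations, Fubini's Theorem---justified because the integrand is nonnegative and bounded by $2$---lets me exchange the expectation with the integral over directions. It therefore suffices to compute the expected width of $X_n$ in a single fixed direction $e$.

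For a fixed $e$, the width of $X_n$ in direction $e$ is exactly the range $\max_i \scalprod{a_i}{e} - \min_i \scalprod{a_i}{e}$ of the projections of the $n$ sample points onto $e$; the extreme projection values are attained at sample points, so the polytope's width and the point-set range coincide and no information is lost in passing to the points. This is the step where Archimedes' Lemma does the real work: it tells us that the projections $\scalprod{a_i}{e}$ are independent and uniformly distributed on the diameter, which I may take to be $[-1,1]$. Hence the expected width in direction $e$ equals the expected range of $n$ independent uniform samples on an interval of length $2$.

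The remaining ingredient is the classical order-statistics fact that, for $n$ independent uniform points on an interval of length $\ell$, the expected minimum and maximum sit at distance $\ell/(n+1)$ from the respective endpoints, so the expected range is $\ell \cdot \tfrac{n-1}{n+1}$. With $\ell = 2$ this yields $2 \cdot \tfrac{n-1}{n+1}$, a value independent of the chosen direction $e$. Averaging this constant over all directions leaves it unchanged, and recognizing $2 = \Width(\BB^3)$ produces the claimed formula $\Expected{\Width(X_n)} = \Width(\BB^3) \cdot \tfrac{n-1}{n+1}$.

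I do not anticipate a genuine obstacle: the argument is a clean reduction in which each ingredient is either standard (the projection representation of mean width, the uniform order statistics) or already in hand (Archimedes' Lemma). The only points requiring a word of care are the interchange of expectation and the direction integral, which is immediate from boundedness, and the remark that the extreme projections occur at sample points, so that the width of the random polytope is literally the range of the projected sample. The conceptual content is simply that Archimedes flattens the spherical geometry into the uniform distribution on a segment, after which the answer is dictated by the expected range of uniform order statistics.
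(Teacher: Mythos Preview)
Your proof is correct and follows essentially the same route as the paper: reduce the mean width to the expected range of the projections onto a fixed line via Archimedes' Lemma, and then compute that range as $2\tfrac{n-1}{n+1}$. The only cosmetic difference is that you justify the reduction to a single direction by Fubini, whereas the paper invokes rotational symmetry of the distribution directly; the order-statistics computation is the same in both.
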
	
\begin{proof}
  The formula follows from Archimedes' Lemma.
  Indeed, by rotational symmetry, the mean width is the expected length
  of the projection of the random polytope onto a fixed direction.
  By Archimedes' Lemma, the projection is distributed as the
  segment connecting the first and last of $n$ points
  chosen uniformly and independently from $[-1, 1]$.
  Like in the proof of Lemma \ref{lem:expected_distance},
  we note that $n$ points divide a segment into $n+1$ identically
  (though not independently) distributed pieces, so the expected distance between
  the first and the last point is $2\tfrac{n-1}{n+1}$, as claimed.
\end{proof}

We now move to the area.
We start with a lemma that somehow escaped from Section \ref{sec:3-Archimedes}
to the third millennium.
We need the \emph{Crofton measure} on the space of lines in $\RR^d$,
which is the unique isometry-invariant measure on lines in $\RR^d$ normalized to have the total measure 1 of lines intersecting the unit ball.
In vague terms, it can be obtained by choosing a uniformly random direction,
followed by assigning the measure of the lines parallel to this direction
to be the Lebesgue measure on the orthogonal plane.
It is interesting, that in $\RR^3$ the Crofton measure has a simple description.
\begin{lemma}[Random Chords]
  \label{lem:random_chords}
  The probability distribution on lines intersecting $\Sphere$,
  defined by choosing two points uniformly and independently on $\Sphere$,
  coincides with the Crofton measure.
\end{lemma}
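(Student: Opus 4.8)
The plan is to exploit that both measures are invariant under the group $SO(3)$ of rotations about the center of the sphere, and to collapse the whole statement to matching a single real parameter. A line meeting $\BB^3$ is determined by its unoriented direction $e$ together with the foot point $p$ of the perpendicular dropped from the origin, and the only $SO(3)$-invariant of such a line is the distance $d=|p|\in[0,1]$ from the center to the line. Since the uniform distribution on $\Sphere$ is rotation invariant, the two-point measure is $SO(3)$-invariant, and the Crofton measure is isometry invariant by definition. First I would argue that any $SO(3)$-invariant probability measure on lines meeting $\BB^3$ is completely determined by the law of $d$: the rotations about the $e$-axis act transitively on each circle $\{|p|=d\}$ inside $e^\perp$, and the whole group acts transitively on the set of lines at any fixed distance $d\in(0,1)$, so the conditional law of the line given $d$ is forced to be the unique invariant one. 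Hence it suffices to show that the two measures induce the same distribution of $d$.

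For the two-point measure this distribution falls out of Archimedes' Lemma. Writing $\gamma$ for the central angle $\angle a\,O\,b$, the foot point is the midpoint of the chord $ab$, so $d=\cos(\gamma/2)$ while the half-chord has length $\sin(\gamma/2)$. Fixing $a$ and applying Archimedes to the projection of $b$ onto the diameter through $a$, the quantity $\scalprod{a}{b}=\cos\gamma$ is uniform on $[-1,1]$. Since $\cos\gamma = 2\cos^2(\gamma/2)-1 = 2d^2-1$, this says precisely that $2d^2-1$ is uniform on $[-1,1]$, that is, $d^2$ is uniform on $[0,1]$, equivalently $\Probable{d\leq s}=s^2$.

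Finally I would read off the same law from the Crofton side: by its description, conditioned on the direction $e$ the foot point $p$ is distributed according to Lebesgue measure on the unit disk of $e^\perp$, and a uniform point in a disk of radius $1$ satisfies $\Probable{|p|\leq s}=s^2$, so $d^2$ is again uniform on $[0,1]$. As both measures are $SO(3)$-invariant with the same law of $d$, the reduction in the first step forces them to coincide. I expect the main obstacle to be that first step, namely making precise that $SO(3)$-invariance together with the single invariant $d$ pins down the measure; concretely one must check that the group acts transitively on the lines at each fixed distance with only a finite stabilizer, so that the conditional measure on each orbit is unique, and that the degenerate orbits $d\in\{0,1\}$ carry no mass. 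The two explicit computations of the law of $d$ are then routine.
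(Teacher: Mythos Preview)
Your proposal is correct and is essentially the paper's own argument: both reduce, via rotation invariance, to matching the law of the single $SO(3)$-invariant of a chord, and both compute that law on one side with Archimedes' Lemma and on the other as an area ratio in the orthogonal disk. The only cosmetic difference is that the paper parametrizes by the chord length $\ell$ rather than by the distance $d$ to the center (related by $\ell=2\sqrt{1-d^2}$), obtaining $\Probable{\ell<l}=l^2/4$ in place of your $\Probable{d\le s}=s^2$; the paper likewise asserts the ``invariance plus one parameter determines the measure'' step without further ado.
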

\begin{proof}
  A line that intersects $\Sphere$ in two points defines a \emph{chord},
  which is the straight segment connecting the two points.
  We compare the lengths of the chords under the two distributions.
  Since the two distributions are invariant under rotations,
  showing that both give rise
  to identical distributions of chord lengths suffices to prove the lemma.
	
  When we choose two random points, we can assume that one of them
  is the north pole, $N$.
  Then, for $\ell \in [0, 2]$, the probability that the second point, $x$,
  is closer to $N$ than $\ell$ equals the fraction of the sphere covered
  by the spherical cap centered at $N$, such that the furthest point
  of the cap has Euclidean distance $\ell$ to $N$.
  It is easy to see that the height of this cap is $\ell^2 / 2$.
  Thus, Archimedes' lemma implies that the fraction in question
  is ${\ell^2}/{4}$.
   	
  For the Crofton measure, the lines that intersect the ball in a chord
  of length less than $\ell$ are the ones that avoid the ball of radius
  $\sqrt{1-(\ell/2)^2}$ centered at the origin.
  For any fixed direction, the ratio of such lines to the measure of lines
  intersecting the ball is the area fraction of the annulus with
  inner radius $\sqrt{1-(\ell/2)^2}$ and outer radius $1$.
  This ratio is again $\ell^2/4$, which concludes the proof.
\end{proof}
The \emph{Crofton's formula} asserts
that the $(d-1)$-volume of the boundary of any convex body in~$\RR^d$
is proportional to the Crofton measure of the lines intersecting it.
Applying this in $\RR^3$, we see that the ratio of the area of the inscribed
polytope $X_n$ to the area of $\Sphere$ is the fraction of the lines
intersecting $\Sphere$ that also intersect $X_n$.
This observation lets us conclude the theorem.
\begin{theorem}[Area I]
  \label{thm:area_I}
  Let $n \geq 4$ points be chosen uniformly and independently on the unit sphere
  in $\RR^3$, and let $X_n$ be their convex hull.
  Then the expected surface area of $X_n$ is
  \begin{align}
    \Expected{\Area(X_n)}   &=  \;\Area(\BB^3) \cdot \tfrac{n-1}{n+1} \tfrac{n-2}{n+2},
      \label{eqn:Thm-EIV-I2}
  \end{align}
  in which $\Area(\BB^3) = 4 \pi$ is the area of the boundary of the unit ball.
\end{theorem}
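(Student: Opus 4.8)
The plan is to turn the expected surface area into a combinatorial counting problem using the two tools just established, Crofton's formula and the Random Chords lemma, and then to evaluate the count via Euler's formula. By the discussion preceding the theorem, Crofton's formula gives $\Area(X_n)/\Area(\Sphere)$ as the fraction of lines meeting $\Sphere$ that also meet $X_n$, and by Lemma \ref{lem:random_chords} that fraction is exactly the probability, under the chord measure, that a random chord meets $X_n$. Since a chord is the segment $[a,b]$ between two further independent uniform points $a,b\in\Sphere$, I would take expectations and apply Fubini to write
\begin{align*}
  \Expected{\Area(X_n)} = \Area(\Sphere)\cdot p_n, \qquad
  p_n := \Probable{[a,b]\cap\conv{\{x_1,\dots,x_n\}}\neq\emptyset},
\end{align*}
where $a,b,x_1,\dots,x_n$ are $n+2$ i.i.d.\ uniform points on $\Sphere$ and $X_n=\conv{\{x_1,\dots,x_n\}}$.

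The heart of the argument is a combinatorial lemma: for $n+2$ points in general position on $\Sphere$, writing $K=\conv{\{x_1,\dots,x_n\}}$ and $P=\conv{\{a,b,x_1,\dots,x_n\}}$, the chord $[a,b]$ meets $K$ \emph{if and only if} $\{a,b\}$ is \emph{not} an edge of $P$. One direction is easy: if $[a,b]$ is an edge, a supporting plane containing it keeps every $x_i$ strictly on one side, so $K$ is disjoint from the chord. For the converse I would use that $P$ is almost surely simplicial (here $n\geq 4$ guarantees nondegeneracy), so a non-edge pair has its open segment in the interior of $P$; projecting along the direction $b-a$ collapses $a$ and $b$ to a single point $\bar m$ that is then interior to the planar shadow $\conv{(\{\bar m\}\cup\proj{(b-a)^\perp}{K})}$, which forces $\bar m\in\proj{(b-a)^\perp}{K}$, i.e.\ the line through $a,b$ meets $K$.

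The step where the hypothesis that all points lie on $\Sphere$ becomes essential is the passage from ``the line meets $K$'' to ``the chord meets $K$'': because the line through $a,b$ meets the ball only along $[a,b]$ and $K\subseteq\BB^3$, any intersection of the line with $K$ must occur inside $[a,b]$. This is exactly the point I expect to be the main obstacle, since for points merely in convex position the line could meet $K$ outside the segment, and the inscribed-in-the-sphere condition is what rules this out.

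Finally I would harvest the count. By exchangeability of the $n+2$ points,
\begin{align*}
  p_n = 1-\Probable{\{a,b\}\text{ is an edge of }P}
      = 1-\frac{\Expected{E}}{\binom{n+2}{2}},
\end{align*}
where $E$ is the number of edges of $P$. Since $P$ is almost surely a simplicial $3$-polytope on $n+2$ vertices, Euler's formula makes $E$ deterministic, $E=3(n+2)-6=3n$, so
\begin{align*}
  p_n = 1-\frac{3n}{\binom{n+2}{2}} = 1-\frac{6n}{(n+1)(n+2)}
      = \frac{(n-1)(n-2)}{(n+1)(n+2)}.
\end{align*}
Multiplying by $\Area(\Sphere)=\Area(\BB^3)=4\pi$ then yields \eqref{eqn:Thm-EIV-I2}.
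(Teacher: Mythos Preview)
Your proof is correct and follows essentially the same route as the paper: Crofton plus Lemma~\ref{lem:random_chords} reduce the area ratio to the probability that two extra random points form a diagonal (i.e.\ a non-edge) of $X_{n+2}$, and Euler's formula finishes the count. The paper asserts this chord--diagonal equivalence without justification; you supply a clean argument for it, including the key observation that inscribing in $\Sphere$ is what upgrades ``the line through $a,b$ meets $K$'' to ``the segment $[a,b]$ meets $K$''.
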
	
\begin{proof}
  By Lemma \ref{lem:random_chords}, the mentioned fraction
  is the probability that a random chord---which has the distribution
  of $X_2$---intersects $X_n$.
  Joining all points together, it is the probability that the \emph{extra}
  two points span a diagonal of $X_{n+2}$.
  There are $\frac{1}{2} (n+2)(n+1)$ pairs of vertices and
  (by Euler's formula) $3n$ edges, so this probability is
  \begin{align}
    \frac{\frac{1}{2} (n+2)(n+1) - 3n}{\frac{1}{2} (n+1)(n+2)}
      &=  \frac{(n-1)(n-2)}{(n+1)(n+2)}.
  \end{align}
  Multiplying by the area of $\Sphere$, we get the claimed identity.
\end{proof}

For the volume, we present a combinatorial proof without going into the
integral geometry details and refer the reader to \cite[Corollary 3.11]{Kab17}
for an alternative proof.
\begin{theorem}[Volume I]
  \label{thm:volume_I}
  Let $n \geq 4$ points be chosen uniformly and independently on the unit sphere
  in $\RR^3$, and let $X_n$ be their convex hull.
  Then the expected volume of $X_n$ is
  \begin{align}
    \Expected{\Vol(X_n)}    &=  \;\Vol(\BB^3) \cdot \tfrac{n-1}{n+1} \tfrac{n-2}{n+2}
                                             \tfrac{n-3}{n+3},
      \label{eqn:Thm-EIV-I3}
  \end{align}
  in which $\Vol(\BB^3) = \tfrac{4 \pi}{3}$ is the volume of the unit ball.
\end{theorem}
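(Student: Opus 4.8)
The plan is to decompose the volume into signed cones from the center $0$ over the facets of $X_n$, exactly in the spirit of Section~\ref{sec:43-MeasureofFacets}, and then to strip away the geometry of a single triangle using the independence of shape and size. Each facet $\theta=\{a,b,c\}$ contributes the cone $\conv{(\{0\}\cup\theta)}$, counted positively when the small circumcap $\Scc\theta$ is empty---so that $0$ lies on the polytope side of the facet plane---and negatively when the big circumcap $\Bcc\theta$ is empty. A triple is a facet exactly when one of its two caps swallows all remaining $n-3$ points, and by Archimedes' Lemma the corresponding cap-area fractions are $\tfrac{1\pm h}{2}$, where $h=\sqrt{1-r^2}$ is the height of the cone (the distance from $0$ to the facet plane) and $r$ is the circumradius. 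Summing over the $\binom{n}{3}$ triples I would obtain
\[
  \Expected{\Vol(X_n)} = \binom{n}{3}\,\Expected{\tfrac13\,\Area(\theta)\,h\Bigl[\bigl(\tfrac{1+h}{2}\bigr)^{n-3}-\bigl(\tfrac{1-h}{2}\bigr)^{n-3}\Bigr]},
\]
the two terms recording the positive (small) and negative (big) facets.

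The same tally applied to the surface area gives the companion identity
\[
  \Expected{\Area(X_n)} = \binom{n}{3}\,\Expected{\Area(\theta)\Bigl[\bigl(\tfrac{1+h}{2}\bigr)^{n-3}+\bigl(\tfrac{1-h}{2}\bigr)^{n-3}\Bigr]},
\]
which is identical except that every facet now counts positively and the cone-height factor $h$ is absent. By Lemma~\ref{lem:shape_vs_size} the circumradius $r$ is independent of the shape of the triangle, and since $\Area(\theta)=r^2\cdot(\text{area of the triangle rescaled to unit circumradius})$, the shape expectation factors out of both identities and cancels when I divide the volume identity by the area identity. This leaves a ratio of two one-dimensional moments of the circumradius of a single random inscribed triangle, with all combinatorial constants gone.

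It then suffices to show that this ratio contributes exactly one new factor, namely
\[
  \frac{\Expected{r^2 h\bigl[(\tfrac{1+h}{2})^{n-3}-(\tfrac{1-h}{2})^{n-3}\bigr]}}
       {\Expected{r^2\bigl[(\tfrac{1+h}{2})^{n-3}+(\tfrac{1-h}{2})^{n-3}\bigr]}}
  = \frac{n-3}{n+3}.
\]
Granting this and using $\Vol(\BB^3)/\Area(\BB^3)=\tfrac13$ together with Theorem~\ref{thm:area_I}, I would conclude
\[
  \Expected{\Vol(X_n)} = \tfrac13\cdot\tfrac{n-3}{n+3}\cdot\Expected{\Area(X_n)}
  = \Vol(\BB^3)\cdot\tfrac{n-1}{n+1}\tfrac{n-2}{n+2}\tfrac{n-3}{n+3},
\]
which is the assertion.

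The hard part will be the radial identity in the previous display. I would first obtain the law of the cap height $h$ of a random inscribed triangle by elementary means---projecting onto the random facet normal and invoking Archimedes' Lemma once more---and then express both moments as Beta integrals in $h$. The delicate point is the interplay of the signed combination $(\tfrac{1+h}{2})^{n-3}-(\tfrac{1-h}{2})^{n-3}$ with the extra weight $h$: the two cap powers nearly cancel, and the computation must show that what survives is precisely the single rational factor $\tfrac{n-3}{n+3}$. I expect a short binomial (or Beta-function) identity to close this step, and I note that it is exactly the sign pattern---plus for small, minus for big facets---that lets the central cones reconstruct the true volume even on the event that $0$ falls outside $X_n$.
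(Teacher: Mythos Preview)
Your proposal is correct but takes the analytic route that the paper explicitly sets aside in favour of a purely combinatorial argument. The paper samples $n+3$ points, splits them as $X_n \cup X_2 \cup X_1$, interprets $X_2$ as a Crofton line via Lemma~\ref{lem:random_chords} and $X_1$ as a slicing plane, and then counts line--facet and plane--tetrahedron incidences over all such partitions using only Euler's formula; the product $\tfrac{(n-1)(n-2)(n-3)}{(n+1)(n+2)(n+3)}$ falls out of this count with no integration whatsoever. Your route---the signed cone decomposition of Section~\ref{sec:43-MeasureofFacets}, shape--size independence (Lemma~\ref{lem:shape_vs_size}) to peel off the shape factor, reduction to Theorem~\ref{thm:area_I}, and a Beta computation for the remaining radial ratio---is precisely what the paper calls ``the general approach'' and in fact uses for Theorem~\ref{thm:total_edge_length}. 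The radial identity you isolate does hold: once one knows that the marginal density of $h$ is proportional to $1-h^2$ (this is what the Blaschke--Petkantschin change of variables in \eqref{eqn:int_l_1} yields after the shape integral is separated), the substitution $q=(1+h)/2$ reduces numerator and denominator to linear combinations of $B(n,3)$ and $B(n+1,3)$, and the ratio collapses to $(n-3)/(n+3)$ exactly as you predict. One caution: your plan to obtain the law of $h$ ``by projecting onto the random facet normal and invoking Archimedes' Lemma'' is not obviously workable, since that normal is itself a function of the three points and the conditioning is degenerate; the clean source for the density is the Blaschke--Petkantschin formula already displayed in \eqref{eqn:int_l_1}, and with it the rest of your argument goes through unchanged. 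Your approach buys a mechanism that transfers uniformly to other facet-additive quantities (total edge length, the Poisson case); the paper's buys an integration-free proof in keeping with its stated aesthetic.
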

\begin{proof}
  The idea is similar to the proof of Theorem \ref{thm:area_I} but less direct.
  To write an integral geometry formula for the volume of the tetrahedron
  with base $abc$ and height $h$,
  we note that $2 \Area (abc) / 4 \pi$ is the fraction of lines
  intersecting $\Sphere$ that also intersect the triangle,
  and $\tfrac{h}{2}$ is the fraction of points on the diameter normal to $abc$
  for which the plane parallel to the triangle intersects the tetrahedron.
  Relating this formalism to the volume of the inscribed polytope,
  we pick a vertex $z$ as apex and form tetrahedra by connecting $z$
  to all triangular facets not incident to $z$.
  The total volume of these tetrahedra is $\Vol (X_n)$.
  Taking the sum over all vertices $z \in X_n$,
  we get the $2n-4$ triangles connected to $n-3$ vertices each,
  which amounts to $(2n-4)(n-3)$ tetrahedra with total volume $n \Vol (X_n)$.

  The rest of the argument is combinatorial.
  Picking $n+3$ points on $\Sphere$, we use $n$ to define the polytope,
  $2$ to define the line, and keep the remaining $1$ point to construct the plane.
  There are $\tfrac{1}{2} (n+3) (n+2) (n+1)$ ways to partition $X_{n+3}$
  into $X_n$, $X_2$, $X_1$.
  The plane that contains a facet of $X_n$ bounds two half-spaces,
  and we call the one that contains $X_n$ the \emph{positive side},
  while the other is the \emph{negative side} of the facet.
  Note that $X_1$ is on the negative side iff the facet of $X_n$ is not a facet
  of $X_{n+1} = \conv{(X_n \cup X_1)}$.
  We measure the volume of the $(2n-4)(n-3)$ tetrahedra combinatorially,
  and we do this for all partitions of the $n+3$ points simultaneously.
  Specifically, for each tetrahedron $zabc$, we multiply the number of lines
  that intersect $abc$ with the number of planes parallel to $abc$
  that intersect the tetrahedron.
  If $X_1$ is on the negative side of $abc$, then the product vanishes,
  so we can focus on the remaining facets,
  which are also the facets of $X_{n+1}$.

  Consider a facet, $F$, of $X_{n+1}$ that intersects $X_2$.
  There are $n-2$ choices for $X_1$, namely all vertices of $X_{n+1}$
  that are not incident to $F$.
  Picking $2$ of the $n-2$ vertices---one for $X_1$ and the other
  for the apex of the tetrahedron---we note that for one of
  two ordered choices the plane parallel to $F$ intersects the tetrahedron.
  This gives a total of $\binom{n-2}{2}$ plane-tetrahedron intersections,
  computed for a fixed choice of $X_{n+1}$ and $X_2$ and for a fixed triangle.
  Importantly, this number depends only on $n$.
  Next we recall Lemma \ref{lem:random_chords}, which asserts that
  $X_2$ gives a uniform measure on the lines intersecting~$\Sphere$.
  Each partition of the $n+3$ points into $X_{n+1}$ and $X_2$ gives a line
  that either intersects two facets (namely when $X_2$ is not an edge
  of $X_{n+3}$) or no facet (when $X_2$ is an edge of $X_{n+3}$).
  As argued in the proof of the area case above,
  of the $\binom{n+3}{2}$ pairs there are $3(n+3)-6 = 3n+3$ edges.
  The total number of line-triangle intersections is therefore
  $2 \binom{n+3}{2} - 2 (3n+3) = 2 \binom{n}{2}$,
  which again depends only on $n$.
  Multiplying with the number of plane-tetrahedron intersections,
  and averaging over all partitions of the $n+3$ points, we get
  \begin{align}
    \frac{2 \binom{n}{2} \binom{n-2}{2}}{\frac{1}{2} (n+3)(n+2)(n+1)}
      &=  \frac{n(n-1)(n-2)(n-3)}{(n+1)(n+2)(n+3)} ,
  \end{align}
  which is $n$ times the volume fraction, as required.
\end{proof}

\subsection*{Remark}
If we declare that the convex hull of three points is a double covered triangle,
then the formula \eqref{eqn:Thm-EIV-I2} holds for $n \geq 3$.
With this stipulation, the formulas for the intrinsic volumes
$V_1 = \Width$, $V_2 = \Area$ and $V_3 = \Vol$ can be
combined in a single expression that holds for all $n \geq 1$:
\begin{align}
  \Expected{V_k(X_n)}  &=  V_k(\BB^3) \cdot
    \frac{\Gamma(n)}{\Gamma(n - k)} \frac{\Gamma(n + 1)}{\Gamma(n + k + 1)}.
  \label{eq:all_uni}
\end{align}

\subsection{{\sc{Centrally Symmetric Polytopes}}}
\label{sec:52-CentrallySymmetricPolytopes}

We extend the analysis to centrally symmetric polytopes inscribed
in the unit sphere, reproving with combinatorial arguments the formulas first obtained in \cite{Kab17}.
To construct a random such polytope, we drop $n$ points uniformly and independently
on $\Sphere$ and take the convex hull of these points as well as their antibodes:
$\Xsym_{2n} = \conv{(X_n \cup (- X_n))}$.
\begin{theorem}[Intrinsic Volumes II]
  \label{thm:intrinsic_volumes_II}
  Let $n \geq 3$ points be chosen uniformly and independently on the unit sphere
  in $\RR^3$.
  Then the expected intrinsic volumes of $\Xsym_{2n}$ are
  \begin{align}
    \Expected{\Width(\Xsym_{2n})} &= \Width(\BB^3) \cdot \tfrac{n}{n+1} ,
      \label{eqn:cspW} \\
    \Expected{\Area(\Xsym_{2n})}  &= \;\Area(\BB^3) \cdot \tfrac{n-1}{n+2} ,
      \label{eqn:cspA} \\
    \Expected{\Vol(\Xsym_{2n})}  &= \;\Vol(\BB^3) \cdot \tfrac{n}{n+1} \tfrac{n-2}{n+3}.
      \label{eqn:cspV}
  \end{align}
\end{theorem}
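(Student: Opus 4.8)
My plan is to prove the three identities one at a time, reusing the tools of Section~\ref{sec:5-IntrinsicVolume} together with a single new ingredient, a \emph{sign-flip symmetry}. Since each generator $a_i$ is uniform on $\Sphere$, its law is invariant under $a_i\mapsto -a_i$; and since $\Xsym_{2n}=\conv{(X_n\cup(-X_n))}$ depends only on the unordered set $\{\pm a_i\}$, flipping the sign of one generator preserves both the polytope and the distribution of the whole configuration while exchanging the roles of $a_i$ and $-a_i$. This lets me equate the probabilities of events that differ only by such a sign, which is exactly what distinguishes the centrally symmetric case from Section~\ref{sec:5-IntrinsicVolume}.

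The mean width \eqref{eqn:cspW} is immediate from Archimedes' Lemma. By rotational symmetry $\Expected{\Width(\Xsym_{2n})}$ is the expected length of the projection of $\Xsym_{2n}$ onto a fixed direction $e$, and because the vertex set $\{\pm a_i\}$ is symmetric this projection is the symmetric segment of half-length $\max_i|\scalprod{a_i}{e}|$. Archimedes' Lemma makes each $\scalprod{a_i}{e}$ uniform on $[-1,1]$, so each $|\scalprod{a_i}{e}|$ is uniform on $[0,1]$, and the expected maximum of $n$ independent such values is $\tfrac{n}{n+1}$. Hence $\Expected{\Width(\Xsym_{2n})}=2\cdot\tfrac{n}{n+1}=\Width(\BB^3)\tfrac{n}{n+1}$.

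For the area I follow the proof of Theorem~\ref{thm:area_I}. By Crofton's formula and Lemma~\ref{lem:random_chords}, $\Expected{\Area(\Xsym_{2n})}/\Area(\BB^3)$ is the probability that a random chord, given by two independent uniform points $b_1,b_2$, meets $\Xsym_{2n}$; exactly as in the non-symmetric case the chord meets the inscribed body iff $\{b_1,b_2\}$ is a diagonal rather than an edge of the hull obtained by adding the two points. Adjoining the antipodes $-b_1,-b_2$ does not change this status, since a supporting plane through an edge $b_1b_2$ keeps the origin, and therefore also $-b_1,-b_2$, strictly on the interior side; so I may pass to $Y=\Xsym_{2(n+2)}$ on the $m=n+2$ generators $a_1,\dots,a_n,b_1,b_2$, in which $\{b_1,b_2\}$ is a \emph{same-sign} pair. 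Here the sign-flip symmetry is decisive: replacing $b_2$ by $-b_2$ preserves the law and $Y$ but turns the event ``$\{b_1,b_2\}$ is an edge'' into ``$\{b_1,-b_2\}$ is an edge'', so a same-sign pair and a mixed-sign pair of generators are equally likely to be an edge. Since $Y$ is simplicial with $2m$ vertices it has $6m-6$ edges, none of them antipodal pairs, and these split into $m(m-1)$ same-sign and $m(m-1)$ mixed-sign pairs; equal edge-probabilities then force $3m-3$ edges of each type, so a fixed same-sign pair is an edge with probability $\tfrac{3}{m}$ and a diagonal with probability $1-\tfrac{3}{m}=\tfrac{m-3}{m}=\tfrac{n-1}{n+2}$, which is \eqref{eqn:cspA}.

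For the volume \eqref{eqn:cspV} I adapt the combinatorial double counting of Theorem~\ref{thm:volume_I}. Decomposing $\Xsym_{2n}$ into tetrahedra over its facets with a vertex apex, the volume fraction becomes the expectation of a count over $n+3$ independent uniform points split into $n$ generators, a chord $\{b_1,b_2\}$, and a plane point $c_1$, where a facet $F$ contributes when the chord crosses $F$ and $c_1$ lands in the height slab between $F$ and the apex. The geometric half of the argument goes through as before: writing $d_F=\scalprod{a}{\hat n_F}$ for the distance from the origin to the plane of $F$, the height condition places $c_1$ in the slab $|\scalprod{c_1}{\hat n_F}|<d_F$ between $F$ and its antipode, so the contributing facets are exactly the facets of $\Xsym_{2(n+1)}$ (generators $a_1,\dots,a_n,c_1$) not incident to $\pm c_1$, and the chord-crossing is again governed by Lemma~\ref{lem:random_chords} and the diagonal count of $\Xsym_{2(n+3)}$. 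The remaining factor, the number of admissible apices for a fixed facet, is where central symmetry genuinely bites: unlike in Theorem~\ref{thm:volume_I}, the plane point and the apex no longer range over the same vertex set, so the per-facet count is not a constant binomial but a sign-dependent quantity that does not factor out. I expect this to be the main obstacle, and I would resolve it by the sign-flip symmetry above, averaging each generator over $\pm$ to replace the biased apex/plane counts by symmetric ones; carrying this out should give the expected total $n^2(n^2-4)$ over all $\tfrac12(n+1)(n+2)(n+3)$ role-assignments predicted by \eqref{eqn:cspV}, with the delicate step being the bookkeeping of which antipode plays each role.
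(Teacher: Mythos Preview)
Your proofs of \eqref{eqn:cspW} and \eqref{eqn:cspA} are correct and follow the paper's arguments closely; the sign-flip symmetry $a_i\mapsto -a_i$ that you isolate is exactly what the paper's terse edge count relies on, without saying so, when it identifies the probability that the marked pair $\{a,b\}$ is an edge with the overall fraction of edges among non-antipodal vertex pairs of $\Xsym_{2(n+2)}$.

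For the volume you give only a plan, and it is headed the hard way. You carry over the vertex-apex decomposition from Theorem~\ref{thm:volume_I} and then correctly diagnose the obstacle: the apex ranges over the $2n$ vertices of $\Xsym_{2n}$ while the plane-point $c_1$ is a fresh generator, so the per-facet apex count is not constant and does not factor out. (The slab condition $|\scalprod{c_1}{\hat n_F}|<d_F$ you write down is in fact the \emph{origin}-apex height, not a vertex-apex one, so the paragraph is already mixing the two decompositions.) The paper sidesteps the difficulty entirely by taking the origin as the single apex, which is available precisely because $\Xsym_{2n}$ is centrally symmetric and contains~$0$. Then $\Vol(\Xsym_{2n})=\sum_F\tfrac13\Area(F)\,d_F$ with one tetrahedron per facet; splitting $n+3$ generators into $\Xsym_{2n}$, a chord pair $\Xsym_4$, and a point pair $\Xsym_2=\{c,-c\}$, the plane through $c$ parallel to a facet $F$ meets exactly one of the two antipodal tetrahedra over $F$ and $-F$, and the whole count collapses to the diagonal fraction $\tfrac{n}{n+3}$ in $\Xsym_{2(n+3)}$ times the fraction $\tfrac{n-2}{n+1}$ of vertices of $\Xsym_{2(n+1)}$ incident to neither $F$ nor~$-F$. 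No sign bookkeeping over apices is needed.
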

\begin{proof}
  First the mean width.
  Dropping $n$ points into $[-1,1]$ and adding their reflections across $0$
  is equivalent to choosing $n$ points in $[0,1]$ and adding their negatives
  in $[-1,0]$.
  The expected distance between the first and the last point is
  $2 - \frac{2}{n+1} = 2 \frac{n}{n+1}$, which proves \eqref{eqn:cspW}.

  \medskip
  Second the area.
  Consider $n+2$ random pairs of antipodal points,
  which we divide into the vertices of $\Xsym_{2n}$,
  and an ordered quadruplet, $(a, b, -a, -b)$, forming the vertices of $\Xsym_4$.
  We use the latter to define a uniformly random line that intersects $\Sphere$.
  The probability that this line intersects $\Xsym_{2n}$ is the fraction of
  non-antipodal diagonals of $\Xsym_{2n+4}$ among the non-antipodal vertex pairs.
  The number of such pairs is $\frac{1}{2} (2n+4) (2n+2)$,
  from which we subtract the $3 (2n+4) - 6 = 6n+6$ edges of $\Xsym_{2n+4}$.
  The fraction is
  \begin{align}
    \frac{2 (n+2)(n+1) - 6 (n+1)}{2 (n+2)(n+1)}  &=  \frac{n-1}{n+2} .
  \end{align}
  Accordingly, the expected area of $\Xsym_{2n}$ is $\Area(\BB^3) = 4\pi$
  times this fraction, which proves \eqref{eqn:cspA}.

  \medskip
  Third the volume.
  We modify the proof of Theorem \ref{thm:volume_I} by working with only
  one set of tetrahedra, constructed by connecting the origin with the
  facets of the centrally symmetric polytope.
  To compute their total volume, we consider $n+3$ antipodal point pairs,
  which we divide into $\Xsym_{2n}$, $\Xsym_4$, $\Xsym_2$.
  As before, we use $\Xsym_4$ and $\Xsym_2$ to encode a line and a point,
  which we use to measure volume.
  The line defined by $\Xsym_4$ intersects either
  two or zero facets of $\Xsym_{2n+2} = \conv{(\Xsym_{2n} \cup \Xsym_{2})}$.
  For half of the intersected facets, the plane parallel to the facet
  that passes through the point defined by $\Xsym_2$ intersects the
  corresponding tetrahedron.
  The reason is that the plane intersects exactly one of the two tetrahedra
  spanned by the facet and its antipodal copy.
  The expected volume is therefore the fraction of non-antipodal diagonals
  of $\Xsym_{2n+6}$ among the non-antipodal vertex pairs, times the fraction
  of vertices that are incident neither to the facet nor its antipode:
  \begin{align}
    \frac{2 (n+3)(n+2) - 6 (n+2)}{2 (n+3)(n+2)} \cdot \frac{n-2}{n+1}
      &=  \frac{n}{n+3} \frac{n-2}{n+1} .
  \end{align}
  Accordingly, the expected volume of $\Xsym_{2n}$ is $\Vol(\BB^3) = \frac{4 \pi}{3}$
  times this fraction, which proves \eqref{eqn:cspV}.
\end{proof}

\subsection{{\sc {Poisson Point Process}}}
\label{sec:53-Poisson}

This subsection considers the same three intrinsic volumes but for a Poisson point
process rather than a uniform distribution on the $2$-sphere.
After proper rescaling, in the limit, the expected values for this process
should be the same as for the uniformly sampled points.
Here we give explicit expressions:
given a Poisson point process on $\Sphere$ of intensity $\rho$,
we write $X_\rho$ for its convex hull, and we study expected intrinsic volumes of this
random polytope. 
There are two ways of working with this case as well:
the general approach,
which uses Slivnyak--Mecke and Blaschke--Petkantschin formulas
(see the proof of Theorem \ref{thm:total_edge_length}),
and the reduction to the uniform distribution case,
which we employ in this section.
It uses the conditional representation of the Poisson point process
in a Borel set of finite measure $\lambda$:
first pick a random variable, $n_0$, from a Poisson distribution with
parameter $\lambda \rho$, and second sample
$n_0$ points independently and uniformly in the Borel set.
As such, all quantities of our interest can be written as
$\Expected{\cdot(X_\rho)}
    = \sum_{n = 0}^{\infty} \Expected{\cdot(X_n)} \Probable{n_0=n}$,
in which $\Probable{n_0=n} = e^{- 4 \pi \rho} \tfrac{(4 \pi \rho)^n}{n!}$
since the measure of the sphere is $\lambda = 4 \pi$.
To state the result, we recall the \emph{modified Bessel functions of the first kind}
defined for a real parameter, $\alpha$:
\begin{align}
  \BesselF{\alpha} (x)  &=  \tfrac{1}{\pi}
      \int_{\theta = 0}^{\pi} e^{x \cos \theta} \cos (\alpha \theta) \diff \theta
    - \frac{\sin (\alpha \pi)}{\pi}\int_{t=0}^\infty e^{-x \cosh t - \alpha t} \diff t ;
\end{align}
see e.g.\ \cite{OLBC10}.
The functions in this section all have explicit expressions, and can be expanded
using any mathematical software, but we keep them in
form of Bessel functions for uniformity.
To prepare the proof of Theorem \ref{thm:intrinsic_volumes_III},
we present a straightforward but technical computation of a specific series.
\begin{lemma}[Bessel representation]
  \label{lem:Bessel}
  For positive $k$,
  \begin{align}
    \sum\nolimits_{m = 0}^{\infty}
       \frac{\Gamma(m + k + 1)}{\Gamma(m + 2k + 2)}
       \frac{z^{m + k + 1}}{\Gamma(m + 1)}
    &=  e^{\frac{z}{2}} (\pi z)^{0.5} \BesselF{k+0.5} \left(\tfrac{z}{2}\right).
  \end{align}
\end{lemma}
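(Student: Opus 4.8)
The plan is to collapse the ratio of Gamma functions into a Beta integral, recognize an exponential generating function after swapping sum and integral, and then match the resulting integral against Poisson's integral representation of the modified Bessel function. The powers of $z$ and the $\Gamma(m+1)$ in the denominator are exactly what one needs for the summation to reassemble into $e^{zt}$, so the calculation is essentially forced once the Beta integral is introduced.

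First I would write $\frac{\Gamma(m+k+1)}{\Gamma(m+2k+2)} = \frac{1}{\Gamma(k+1)}\BetaF{m+k+1}{k+1}$, using that $\Gamma(m+2k+2) = \Gamma\bigl((m+k+1)+(k+1)\bigr)$. Replacing the Beta function by its integral $\int_0^1 t^{m+k}(1-t)^k\diff t$ and pulling the $m$-dependent powers of $t$ and $z$ together, the series over $m$ becomes $\sum_m (zt)^m/\Gamma(m+1) = e^{zt}$. After interchanging summation and integration I expect to reach the closed form
\begin{align}
  \sum_{m=0}^{\infty} \frac{\Gamma(m+k+1)}{\Gamma(m+2k+2)}\frac{z^{m+k+1}}{\Gamma(m+1)}
    = \frac{z^{k+1}}{\Gamma(k+1)}\int_0^1 \bigl(t(1-t)\bigr)^k e^{zt}\diff t .
\end{align}

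Next I would symmetrize the integral by substituting $t = \tfrac{1+s}{2}$, which sends $t(1-t)$ to $\tfrac{1-s^2}{4}$ and $e^{zt}$ to $e^{z/2}e^{zs/2}$, producing $\int_{-1}^{1}(1-s^2)^k e^{(z/2)s}\diff s$ up to the explicit constant $\tfrac{1}{2\cdot 4^k}e^{z/2}$. This is precisely Poisson's representation of the modified Bessel function, namely $\BesselF{\nu}(x) = \frac{(x/2)^\nu}{\sqrt{\pi}\,\Gamma(\nu+\frac12)}\int_{-1}^{1}(1-s^2)^{\nu-\frac12}e^{xs}\diff s$, taken with $\nu = k+\tfrac12$ and $x = \tfrac{z}{2}$, so that $\nu-\tfrac12 = k$ and $\Gamma(\nu+\tfrac12) = \Gamma(k+1)$. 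Solving for the integral and substituting it back, the factor $\Gamma(k+1)$ cancels, the powers of $4$ cancel against $\tfrac{1}{2\cdot 4^k}$, and $z^{k+1}/(z/4)^{k+1/2}$ collapses to $\sqrt{z}$ times powers of $2$, leaving exactly $e^{z/2}(\pi z)^{0.5}\BesselF{k+0.5}(z/2)$.

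Apart from this constant-chasing at the end, the two places needing care are the interchange of sum and integral, which I would justify by absolute convergence for positive $k$ and $z$ on any bounded set via Tonelli's theorem, and the admissibility of Poisson's formula, which holds for $\Re\nu > -\tfrac12$ and hence for $\nu = k+\tfrac12$ with $k>0$. I expect the final bookkeeping of the constants to be the only genuinely error-prone step; the structural part of the argument is a direct application of the Beta-integral identity followed by the Poisson representation of $\BesselF{k+0.5}$.
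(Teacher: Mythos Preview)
Your proof is correct and takes a genuinely different route from the paper's. The paper factors the sum as $z^{k+1}\frac{\Gamma(k+1)}{\Gamma(2k+2)}\,\Hypergeometric{1}{1}{k+1}{2k+2}{z}$, then quotes the tabulated identity between $\Hypergeometric{1}{1}{\nu+\tfrac12}{2\nu+1}{2x}$ and $\BesselF{\nu}(x)$ (DLMF 10.39.5) and cleans up the constants with the Legendre duplication formula. Your argument instead replaces the Gamma ratio by a Beta integral, sums the resulting geometric-exponential series to $e^{zt}$, and recognizes the remaining $\int_0^1 (t(1-t))^k e^{zt}\diff t$ as Poisson's integral for $\BesselF{k+1/2}$ after the symmetrizing substitution $t=\tfrac{1+s}{2}$. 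In effect you are \emph{deriving} the $\Hypergeometric{1}{1}{}{}{\cdot}$--Bessel relation on the spot via Euler's integral representation of $\Hypergeometric{1}{1}{}{}{\cdot}$, rather than citing it, and the duplication formula never appears because the constants collapse directly. Your approach is more self-contained and makes the appearance of $\BesselF{k+1/2}$ transparent; the paper's is quicker if one is willing to look up the special-function identity.
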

\begin{proof}
  In addition to straightforward transformations on the expression,
  ({\small $*$}) uses the definition
  of the Kummer confluent hypergeometric function \cite{kummer1837deintefralibus},
  ({\small $**$}) uses the relation between modified Bessel and Kummer
  hypergeometric functions \cite[Formula 10.39.5]{OLBC10},
  and ({\small ${**}*$}) uses the Legendre Duplication Formula
  $\Gamma(2k + 2) = 2^{2k + 1} \Gamma(k) \Gamma(k + 1.5) \pi^{-0.5}$
  \cite[Formula 5.5.5]{OLBC10}:
  \begin{align}
    \sum\nolimits_{m = 0}^{\infty} &
      \frac{\Gamma(m + k + 1)}{\Gamma(m + 2k + 2)}
      \frac{z^{m + k + 1}}{\Gamma(m + 1)} 
    =  z^{k + 1} \frac{\Gamma(k + 1)}{\Gamma(2k + 2)}
         \sum\nolimits_{m = 0}^{\infty}
         \frac{\Gamma(m + k + 1)}{\Gamma(k + 1)}
         \frac{\Gamma(2k + 2)}{\Gamma(m + 2k + 2)} \frac{z^m}{m!} \\
    &\mathop{=}\limits^{(*)} z^{k + 1}
         \frac{\Gamma(k + 1)}{\Gamma(2k + 2)} \Hypergeometric{1}{1}{k+1}{2k+2}{z} \\
    &= \left[\frac{(z/2)^{k + \frac{1}{2}}}{2^{k + \frac{1}{2}} e^{\frac{z}{2}}
         \Gamma(k + 1.5)} \Hypergeometric{1}{1}{k+1}{2k+2}{z} \right]
         \frac{2^{2k + 1} e^{\frac{z}{2}} \Gamma(k + 1.5) z^{0.5} \Gamma(k + 1)}
              {\Gamma(2k + 2)}  \\
    &\mathop{=}\limits^{(**)} \BesselF{k+0.5} \left(\tfrac{z}{2}\right)
         \frac{2^{2k + 1} e^{\frac{z}{2}} \Gamma(k + 1.5) z^{0.5} \Gamma(k + 1)}
              {\Gamma(2k + 2)} \\
    &\mathop{=}\limits^{(***)} e^{\frac{z}{2}} (\pi z)^{0.5} \BesselF{k+0.5}
         \left(\tfrac{z}{2}\right). \qedhere
  \end{align}	
\end{proof}
Having this prepared, the following theorem is easy to prove.
\begin{theorem}[Intrinsic Volumes III]
  \label{thm:intrinsic_volumes_III}
  Let $X_\density$ be the convex hull of the stationary Poisson point process
  with intensity $\density > 0$ on the unit sphere in $\RR^3$.
  Writing $\Width(X_\density)$, $\Area(X_\density)$, and $\Vol(X_\density)$
  for the mean width, surface area, and volume, we obtain the following expressions
  for their expectations:
  \begin{align}
    \Expected{\Width(X_\density)} &= \Width(\BB^3)
      \cdot 2 \pi \density^{0.5} e^{-2 \pi \density} {\BesselF{1.5} (2\pi\density)},
      \label{eqn:Thm-EIV-II1} \\
    \Expected{\Area(X_\density)}  &= \; \Area(\BB^3)
      \cdot 2 \pi \density^{0.5} e^{-2 \pi \density} {\BesselF{2.5} (2\pi\density)},
      \label{eqn:Thm-EIV-II2} \\
    \Expected{\Vol(X_\density)}   &= \; \Vol(\BB^3)
      \cdot 2 \pi \density^{0.5} e^{-2 \pi \density} {\BesselF{3.5} (2\pi\density)},
      \label{eqn:Thm-EIV-II3}
  \end{align}
  in which $\BesselF{\alpha} (x)$ is the modified Bessel function of the first kind.
\end{theorem}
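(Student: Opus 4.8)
The plan is to reduce the Poisson statement to the uniform case via the conditional representation of the point process recalled just above the theorem, and then to recognize the resulting power series as the left-hand side of Lemma~\ref{lem:Bessel}. Writing $V_1 = \Width$, $V_2 = \Area$, $V_3 = \Vol$ and abbreviating $z = 4\pi\density$, I would first expand each expectation as a Poisson mixture over the uniform expectations,
\begin{align}
  \Expected{V_k(X_\density)}
    &= \sum\nolimits_{n=0}^{\infty} \Expected{V_k(X_n)} \,
       e^{-4\pi\density} \tfrac{(4\pi\density)^n}{n!},
\end{align}
and substitute the unified uniform formula \eqref{eq:all_uni}, pulling the constant $V_k(\BB^3)$ out of the sum.

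Next I would simplify the summand. Since $\Gamma(n+1)/n! = 1$, the factor $\tfrac{\Gamma(n+1)}{\Gamma(n+k+1)}\tfrac{1}{n!}$ collapses to $1/\Gamma(n+k+1)$, leaving the series $\sum_n \tfrac{\Gamma(n)}{\Gamma(n-k)}\tfrac{z^n}{\Gamma(n+k+1)}$. For a positive integer $k$ the ratio $\Gamma(n)/\Gamma(n-k) = (n-1)(n-2)\cdots(n-k)$ vanishes for $n \le k$, so the sum effectively begins at $n = k+1$; geometrically, this merely records that a polytope with at most $k$ vertices has vanishing $k$-th intrinsic volume.

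The crucial step is the index shift. Setting $m = n-k-1$ turns the series into
\begin{align}
  \sum\nolimits_{m=0}^{\infty}
    \frac{\Gamma(m+k+1)}{\Gamma(m+2k+2)} \,
    \frac{z^{m+k+1}}{\Gamma(m+1)},
\end{align}
which is exactly the left-hand side of Lemma~\ref{lem:Bessel}, so the lemma evaluates it as $e^{z/2}(\pi z)^{0.5}\BesselF{k+0.5}(z/2)$. Finally I would substitute back $z = 4\pi\density$, so that $z/2 = 2\pi\density$ and $(\pi z)^{0.5} = 2\pi\density^{0.5}$, and absorb the leading prefactor: the product $e^{-4\pi\density}e^{2\pi\density}$ collapses to $e^{-2\pi\density}$, yielding $V_k(\BB^3)\cdot 2\pi\density^{0.5}e^{-2\pi\density}\BesselF{k+0.5}(2\pi\density)$ for $k=1,2,3$, which is precisely \eqref{eqn:Thm-EIV-II1}--\eqref{eqn:Thm-EIV-II3}.

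The main obstacle is bookkeeping rather than depth: I must confirm that \eqref{eq:all_uni} may be used term by term for every $n$ (the vanishing of the summand for small $n$ handles both the degenerate low-vertex polytopes and the $n=3$ double-covering convention), and carefully track the half-integer Bessel index so that $k=1,2,3$ lands on $\BesselF{1.5},\BesselF{2.5},\BesselF{3.5}$. All the analytic weight has already been discharged into Lemma~\ref{lem:Bessel}, so once the series is put in standard form the result follows immediately.
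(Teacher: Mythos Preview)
Your proposal is correct and follows essentially the same route as the paper: Poisson mixture over the uniform expectations from \eqref{eq:all_uni}, the cancellation $\Gamma(n+1)/n!=1$, the index shift $m=n-k-1$, and an appeal to Lemma~\ref{lem:Bessel} with $z=4\pi\density$. Your additional remarks about why the summand vanishes for $n\le k$ (covering the degenerate and double-cover cases) only make the argument more explicit than the paper's version.
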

\subsection*{Remark}
As expected, the factors after the intrinsic volumes
of $\BB^3$ tend to $1$ when $\rho \to \infty$.
\begin{proof}
  According to the conditional representation of a Poisson point process,
  it suffices to compute the sum of a series with terms
  from the uniform case \eqref{eq:all_uni}.
  We can thus write
  \begin{align}
    \Expected{V_k(X_\rho)} &= V_k(\BB^3) e^{- 4 \pi \rho}
      \sum\nolimits_{n = k + 1}^{\infty} 
      \frac{\Gamma(n)}{\Gamma(n - k)} \frac{\Gamma(n + 1)}{\Gamma(n + k + 1)}
      \frac{(4 \pi \rho))^n}{n!}.
  \end{align}
  Now we do a simple substitution, $m = n - k - 1$,
  and use the identity $\Gamma(n + 1) = n!$ to get into the setting of
  Lemma \ref{lem:Bessel} with $z = 4\pi \rho$:
  \begin{align}
    \Expected{V_k(X_\rho)} &= V_k(\BB^3)  e^{- 4 \pi \rho}
      \sum\nolimits_{m = 0}^{\infty}
        \frac{\Gamma(m + k + 1)}{\Gamma(m + 1)\Gamma(m + 2k + 2)}
        {(4 \pi \rho)^{m + k + 1}} \\
    &= V_k(\BB^3) e^{-4\pi \rho} e^{2\pi\rho}
         (\pi \cdot 4\pi\rho)^{0.5} \BesselF{k + 0.5}(2\pi\rho) \\
    &= V_k(\BB^3) 2\pi \rho^{0.5} e^{-2\pi\rho} \BesselF{k + 0.5}(2\pi\rho). \qedhere
  \end{align}
\end{proof}

\bigskip 
\section{Length and Distance}
\label{sec:6-LengthandDistance}

In this section, we study two questions about expected length,
namely the total edge length of a random inscribed polytope and
the Euclidean distance to a fixed point.
The total edge length is not an intrinsic volume,
but the most generic version of the Blaschke--Petkantschin formula
can deal with almost any function of the polytope,
including the sum of edge lengths.
As in Section \ref{sec:5-IntrinsicVolume}, we consider both the uniform
distribution and the Poisson point process,
noting that the result in the latter case bears striking resemblance
to the formulas given in Theorem \ref{thm:intrinsic_volumes_III}.

\subsection{{\sc {Total Edge Length}}}
\label{sec:61-TotalEdgeLength}

We again prepare with a technical lemma.
\begin{lemma}
  \label{lem:integrals_I}
  We have
  \begin{align}
    J_\Length(n)  &=  \int_{t=0}^1 t^{3/2} (1-t)^{-1/2}
                      \left[ \left( \tfrac{1+\sqrt{1-t}}{2} \right)^{n-3}
                           + \left( \tfrac{1-\sqrt{1-t}}{2} \right)^{n-3}
                      \right] \diff t
                   =  32 \cdot \BetaF{n-\tfrac{1}{2}}{\tfrac{5}{2}};
									\label{eqn:Int-I3}\\
			K_\Length(\density)  &=  \int_{t=0}^1 t^{3/2} (1-t)^{-1/2}
									\left[ e^{-2 \pi \density (1+\sqrt{1-t})}
											 + e^{-2 \pi \density (1-\sqrt{1-t})} \right] \diff t
							 =  \tfrac{3}{2 \pi} \, \density^{-2} \, e^{-2 \pi \density}
																	\, \BesselF{2}(2 \pi \density) .
			\label{eqn:Int-II3}
  \end{align}
\end{lemma}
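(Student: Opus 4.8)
The plan is to handle both identities with a single rationalizing substitution, followed by a symmetrization that collapses the two bracketed summands into one integral over $[-1,1]$. Concretely, I would set $s = \sqrt{1-t}$, so that $t = 1 - s^2$, $\diff t = -2s\,\diff s$, and the awkward factor $(1-t)^{-1/2} = s^{-1}$ cancels against the Jacobian. Each integrand then becomes $2(1-s^2)^{3/2}$ times a function of $s$, integrated over $s \in [0,1]$, where it is convenient to write $(1-s^2)^{3/2} = (1-s)^{3/2}(1+s)^{3/2}$.

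The key observation is that the two bracketed terms are exchanged under $s \mapsto -s$. In the first term the argument is $(1+s)/2$ (respectively $e^{-2\pi\density(1+s)}$) and in the second it is $(1-s)/2$ (respectively $e^{-2\pi\density(1-s)}$); reflecting the first term's integral across $s=0$ turns its range into $[-1,0]$ and converts its integrand into exactly that of the second term. Hence each sum collapses to a single integral $2\int_{-1}^{1}(1-s^2)^{3/2}\phi(s)\,\diff s$, with $\phi(s) = \left(\tfrac{1+s}{2}\right)^{n-3}$ for $J_\Length$ and $\phi(s) = e^{-2\pi\density(1-s)}$ for $K_\Length$.

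For $J_\Length(n)$ I would finish with the affine substitution $u = (1+s)/2$, which maps $[-1,1]$ to $[0,1]$, uses $1-s^2 = 4u(1-u)$, and turns the integral into $32\int_0^1 u^{\,n-3/2}(1-u)^{3/2}\,\diff u$; the bookkeeping of the powers of $2$ produces exactly the constant $32$, and the remaining integral is $\BetaF{n-\tfrac12}{\tfrac52}$ by the definition of the Beta function. For $K_\Length(\density)$, writing $a = 2\pi\density$, I would factor out $e^{-a}$ to reach $2e^{-a}\int_{-1}^{1}(1-s^2)^{3/2}e^{as}\,\diff s$ and then invoke Poisson's integral representation $\BesselF{\nu}(x) = \tfrac{(x/2)^\nu}{\sqrt{\pi}\,\Gamma(\nu+\tfrac12)}\int_{-1}^1 (1-s^2)^{\nu-1/2}e^{xs}\,\diff s$ with $\nu = 2$; substituting $\Gamma(\tfrac52) = \tfrac34\sqrt{\pi}$ and simplifying the prefactor $\tfrac{6\pi}{a^2}$ yields the stated $\tfrac{3}{2\pi}\density^{-2}e^{-2\pi\density}\BesselF{2}(2\pi\density)$.

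The steps are routine calculus once the structure is in place; the only genuinely non-mechanical point is recognizing the $s\mapsto -s$ symmetry that merges the two summands and completes the interval to $[-1,1]$ --- without it one is left with two one-sided integrals that are far more cumbersome to evaluate in closed form. For the Bessel identity, the second mild obstacle is identifying the resulting symmetric integral as the Poisson representation of $\BesselF{2}$ (rather than expanding $e^{as}$ termwise and resumming against Lemma \ref{lem:Bessel}), which is what keeps the computation short.
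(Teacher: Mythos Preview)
Your argument is correct and essentially identical to the paper's. The paper applies the two substitutions $s = 1+\sqrt{1-t}$ and $s = 1-\sqrt{1-t}$ to the two summands separately, whereupon both become $t = 2s - s^2$ and the ranges $[1,2]$ and $[0,1]$ concatenate; the further substitution $q = s/2$ is exactly your $u = (1+s)/2$ after identifying your $s$ with the paper's $s-1$. For the Bessel identity the paper sets $s = 1+\cos\theta$ to reach the trigonometric form of DLMF 10.32.2, which is the same Poisson representation you invoke in its algebraic form --- so the only difference is the choice of intermediate variable, not of method.
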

To get the right-hand side of \eqref{eqn:Int-I3}, we first apply a change
of variables $s = 1 + \sqrt{1-t}$ to the left term
and $s = 1 - \sqrt{1-t}$ to the right term or, equivalently,
$t = 2s - s^2$ to both.
Then writing $q = s/2$, we recognize the integral as a multiple of the
beta function for parameters $n - 0.5$ and $2.5$.
For \eqref{eqn:Int-II3}, we first use the same change of variables,
and then set $s = 1 + \cos\theta$ to arrive at
the expression of 10.32.2 in \cite{OLBC10}.
We leave the details to the reader, and note that the integrals can also be computed
with mathematical software.
\begin{theorem}[Total Edge Length]
  \label{thm:total_edge_length}
  Let $X_n$ be the convex hull of $n \geq 3$ points chosen uniformly and independently
  at random on $\Sphere$,
  and let $X_\density$ be the convex hull of a stationary Poisson point process
  with intensity $\density > 0$ on $\Sphere$.
  Then the sums of lengths of the edges on the two inscribed polytopes satisfy
  \begin{align}
    \Expected{\Length(X_n)}
      &= \tbinom{n}{3} \tfrac{512}{3\pi}
                       \cdot B\left(n-\tfrac{1}{2}, \tfrac{5}{2}\right)
     \;\;\;\;\;\;\;\;\;\;\;\;\, \left[ = \tfrac{64}{3 \sqrt{\pi}} \sqrt{n} \cdot (1+o(1))\right],
    \label{eqn:Thm-TEL-I} \\
    \Expected{\Length(X_\density)} 
      &=  \tfrac{128}{3} \density^{0.5} \cdot
          2 \pi \density^{0.5} e^{-2 \pi \density} {\BesselF{2} (2 \pi \density)}
     \;\;\; \left[ = \tfrac{64}{3 \sqrt{\pi}} \sqrt{4\pi\density} \cdot (1+o(1)) \right].
    \label{eqn:Thm-TEL-II}
  \end{align}
\end{theorem}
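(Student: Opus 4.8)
The plan is to treat the total edge length as a sum over the triangular facets of $X_n$ and to exploit the independence of shape and size (Lemma \ref{lem:shape_vs_size}) together with the facet measure of Section \ref{sec:43-MeasureofFacets}. Since the polytope is almost surely simplicial and every edge is shared by exactly two facets, the total edge length is one half of the sum of the facet perimeters:
\begin{align}
  \Length(X_n) &= \tfrac{1}{2} \sum\nolimits_{\theta \text{ facet}} \mathrm{perim}(\theta). \nonumber
\end{align}
By linearity of expectation and the symmetry among the $\binom{n}{3}$ unordered triples, it then suffices to fix one triple $\theta = (a,b,c)$ and evaluate $\Expected{\mathrm{perim}(\theta)\,\One[\theta \text{ is a facet}]}$, the expectation being taken over the positions of all $n$ points.

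Next I would identify the facet indicator. Conditioned on $a,b,c$, the triple spans a plane cutting $\Sphere$ into the caps $\Scc{\theta}$ and $\Bcc{\theta}$, and $\theta$ is a facet exactly when the remaining $n-3$ points all fall into one of them. By Archimedes' Lemma the cap areas depend only on the circumradius $r$ of $\theta$, so writing $t = r^2$ their area fractions are $\tfrac{1\mp\sqrt{1-t}}{2}$ and
\begin{align}
  \Probable{\theta \text{ is a facet} \mid a,b,c}
    &= \left(\tfrac{1+\sqrt{1-t}}{2}\right)^{n-3}
     + \left(\tfrac{1-\sqrt{1-t}}{2}\right)^{n-3}. \nonumber
\end{align}
A triangle inscribed in a circle of radius $r$ has perimeter $r\,p(s)$, with $p(s)$ depending only on the shape $s$, so Lemma \ref{lem:shape_vs_size} lets me split the expectation into an integral over the circumradius and an integral over the shape. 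Using the spherical Blaschke--Petkantschin formula \eqref{eqn:bp-small} as the change of variables, the circumradius carries the weight $t^{3/2}(1-t)^{-1/2}$ (combining the Jacobian of \eqref{eqn:bp-small} with the linear dependence of the perimeter on $r$), while the angular integral over the three positions on the circle, weighted by the triangle area, contributes only a constant. The radial part is then exactly $J_\Length(n)$ of Lemma \ref{lem:integrals_I}, and collecting the constants yields \eqref{eqn:Thm-TEL-I} after rewriting $J_\Length(n) = 32\,\BetaF{n-\tfrac12}{\tfrac52}$.

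For the Poisson case I would run the same computation through the Slivnyak--Mecke formula, which replaces the average over $\binom{n}{3}$ triples by the intensity $\density^3$ integrated over $(\Sphere)^3$, and which replaces the ``all $n-3$ points in one cap'' event by the probability that a cap is empty of Poisson points, namely $e^{-2\pi\density(1\mp\sqrt{1-t})}$. The Blaschke--Petkantschin change of variables again isolates the radial integral, now $K_\Length(\density)$ of Lemma \ref{lem:integrals_I}; assembling the constants gives \eqref{eqn:Thm-TEL-II} and explains its resemblance to Theorem \ref{thm:intrinsic_volumes_III}.

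The main obstacle is the bookkeeping of the Blaschke--Petkantschin Jacobian: one must verify that the powers of the circumradius coming from the volume factor $\mathrm{Vol}_{n-1}^{d-n+1}$ in \eqref{eqn:bp-small}, from the measure $g_{n,d}$, and from the perimeter combine into precisely the weight $t^{3/2}(1-t)^{-1/2}$ of Lemma \ref{lem:integrals_I}, and that the shape (angular) integral evaluates to the constant producing the prefactors $\tfrac{512}{3\pi}$ and $\tfrac{128}{3}$. This is routine but delicate, which is exactly why the two integrals were isolated in the preparatory lemma; the asymptotics displayed in brackets provide a convenient consistency check.
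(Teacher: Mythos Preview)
Your proposal is correct and follows essentially the same route as the paper: write the total edge length as half the sum of facet perimeters, reduce by symmetry to a single triple, identify the facet event via cap emptiness (with probabilities $\big(\tfrac{1\pm\sqrt{1-t}}{2}\big)^{n-3}$ or $e^{-2\pi\density(1\pm\sqrt{1-t})}$), and then apply the spherical Blaschke--Petkantschin decomposition to separate the radial integral $J_\Length(n)$ or $K_\Length(\density)$ from the angular integral $\int_{(\SSS^1)^3}\Area(uvw)\,\Length(uvw)$. The paper carries out exactly the bookkeeping you flag as the main obstacle, evaluating the angular integral to $\tfrac{512\pi}{3}$ and assembling the constants; your identification of the radial weight $t^{3/2}(1-t)^{-1/2}$ and of the area factor from the volume term in \eqref{eqn:bp-small} is on the mark.
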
	
\begin{proof}
  The arguments for the two random models are sufficiently similar,
  so we can present them in parallel,
  writing $X$ whenever a relation holds for both, $X_n$ and $X_\density$.
  We follow the strategy sketched in Section \ref{sec:43-MeasureofFacets}.
  Write $\Length (F)$ for the perimeter of a triangle $F$.
  Every edge belongs to two triangles, which implies that the total edge length
  satisfies $\Length(X) = \sum_{\{a,b,c\} \subseteq X}
                          \OneFacet{abc} \, \tfrac{1}{2} \Length(abc)$,
  where $\OneFacet{abc}$ is the indicator that $abc$ is a facet of $X$.
  Recall that the plane passing through $a, b, c$ cuts the sphere
  into two spherical caps, one big and the other small.
  Three points form a facet iff one of their circumcaps is empty.
  If the total number of points is at least $4$,
  the two caps cannot be empty simultaneously,
  so $\OneFacet{abc} = \OneEmpty{\Bcc{abc}} + \OneEmpty{\Scc{abc}}$,
  in which the indicators on the right-hand side of the equation
  sense if the caps are empty.
  If $X$ has only 3 points, we consider it to be a double cover with two facets,
  so the formula still make sense.
  Rewriting the total edge length in terms of the circumcaps
  and taking the expectation, we get
  \begin{align}
    \Expected{\Length(X_n)} 
      &=  \tbinom{n}{3} \tfrac{1}{2}
          \, \Expected{\OneEmpty{\Scc{abc}} + \OneEmpty{\Bcc{abc}}}
          \, \Length(abc) .
  \end{align}
  Rewriting the expectation, we get 
  \begin{align}
    \Expected{\Length(X)}
      &=  C \int_{a, b, c \in \Sphere}
            ( \Probable{\Scc{abc}\text{ empty}} + \Probable{\Bcc{abc} \text{ empty}} )
            \, \tfrac{1}{2} \Length(abc) \diff a \diff b \diff c ,
  \end{align}
  in which $X = X_n$ and $C = \tbinom{n}{3} /(4 \pi)^3$.
  Using the Slivnyak--Mecke formula, we get the same relation for $X = X_\density$
  except that $C = {\density^3} / {3!}$.
  Call the Euclidean radius of the circle passing through $a, b, c$ the
  (common) \emph{radius} of $\Scc{abc}$ and $\Bcc{abc}$,
  and write $P_+(r)$ for the probability that one of the two caps
  of radius $r$ is empty.
  We apply the Blaschke--Petkantschin formula to get
  \begin{align}
    \Expected{\Length(X)}  &=  C \cdot 2 \pi
         \int_{t = 0}^{1} t (1-t)^{-1/2} \int_{u, v, w \in \SSS^1}
           P_+ (\sqrt{t}) \, \tfrac{1}{2} \Length(\sqrt{t} \cdot uvw) 2! \Area(uvw)
           \diff u \diff v \diff w \diff t 
     \label{eqn:int_l_1} \\
                           &=  C \cdot 2 \pi
           \int_{t = 0}^{1} t^{3/2} (1-t)^{-1/2} P_+ (\sqrt{t}) \diff t
           \int_{u, v, w \in \SSS^1} \Area(uvw) \Length(uvw)
           \diff u \diff u \diff u ,
    \label{eqn:int_l_2}
  \end{align}
  with $C = \tbinom{n}{3} / (4 \pi)^3$ in the uniform distribution case,
  and $C = \density^3 / 3!$ in the Poisson point process case.
  Explicitly,
  \begin{align}
    P_+(n,r)  &=  (\tfrac{1+h}{2})^{n-3} + (\tfrac{1-h}{2})^{n-3},  \\
    P_+(\density, r)  &=  e^{-2 \pi \density (1 + h)}
                         +  e^{-2 \pi \density (1 - h)},
  \end{align}
  in which $h = \sqrt{1-r^2}$ so that $1-h$ and $1+h$ are the heights
  of the two caps.
  Plugging them into \eqref{eqn:int_l_2},
  we get the first integral on the right-hand side
  equal to $J_\Length (n)$ and to $K_\Length (\density)$, respectively;
  see Lemma \ref{lem:integrals_I}.
  To compute the second integral, we fix $u=(1,0)$ and parametrize $v, w$
  with their angles relative to $u$, which we denote $\alpha, \beta$.
  The integral of the area times the length is thus $2 \pi$ times
  the double integral over the two angles:
  \begin{align}
      2 \pi \int_{\alpha, \beta = 0}^{2\pi}
              \!\! \Area(\alpha, \beta)
                \Length(\alpha, \beta) \diff \beta \diff \alpha             
      &=  32 \pi \int_{\alpha, \beta = 0}^{2\pi}
            \!\! \left( \sin \tfrac{\alpha}{2} + \sin \tfrac{\beta}{2}
                 + |\sin \tfrac{\gamma}{2}| \right)
            \sin \tfrac{\alpha}{2} \sin \tfrac{\beta}{2} |\sin \tfrac{\gamma}{2}|
            \diff \beta \diff \alpha ,
    \label{eqn:length_last}
  \end{align}
  in which we use $\Length(\alpha, \beta) = U+V+W$
  and $\Area(\alpha, \beta) = \tfrac{1}{4} UVW$,
  with edges of length $U = 2 \sin \tfrac{\alpha}{2}$, $V = 2 \sin \tfrac{\beta}{2}$,
  and $W = 2 |\sin \tfrac{\gamma}{2}|$, where $\gamma = \alpha - \beta$,
  to get the right-hand side.
  Using the Mathematica software, we find that \eqref{eqn:length_last}
  evaluates to $\tfrac{512 \pi}{3}$.
  Combining the values, we get
  \begin{align}
    \Expected{\Length(X_n)}  &=  \tbinom{n}{3} \tfrac{512}{3\pi}
                                 \cdot B(n-\tfrac{1}{2}, \tfrac{5}{2}) ,   
    \label{eqn:length_I} \\
    \Expected{\Length(X_\density)}  &=  \Expected{\Length(X_n)}
                                        \cdot \tfrac{\density^3 (4 \pi)^3}{n(n-1)(n-2)}
                                        \cdot \tfrac{K_\Length(\density)}{J_\Length(n)} .
    \label{eqn:length_II}
  \end{align}
  The asymptotic expansion claimed in \eqref{eqn:Thm-TEL-I} can now
  be obtained from \eqref{eqn:length_I} using Mathematica.
  The relation claimed in \eqref{eqn:Thm-TEL-II} follows straightforwardly
  from \eqref{eqn:length_II}.
\end{proof}

\subsection*{Remark}
Like in Theorem \ref{thm:intrinsic_volumes_III}, it is also easy to obtain
\eqref{eqn:Thm-TEL-II} from \eqref{eqn:Thm-TEL-I} using
the conditional representation of the Poisson point process
and Lemma \ref{lem:Bessel}.

\subsection{{\sc Minimum Distance}}
\label{sec:62-MinimumDistance}

We finally study how close a random collection of points approaches
a fixed point on the unit $2$-sphere.
Somewhat surprisingly, there is a connection to the volumes of high-dimensional
unit balls.
To state the result, we write $\Vol(\BB^m)$ for the $m$-dimensional volume
of the unit ball in $\RR^m$.
\begin{theorem}[Minimum Distance]
  \label{thm:minimum_distance}
  Let $n$ points be chosen uniformly and independently on the unit sphere in $\RR^3$.
  Then the expected minimum Euclidean distance from a fixed point on the sphere
  is ${\Vol(\BB^{2n+1})} / {\Vol(\BB^{2n})}$.
\end{theorem}
\begin{proof}
  Let $N \in \Sphere$ be the fixed point and consider the cap of points
  with Euclidean distance at most $r$ from $N$.
  Equivalently, the spherical radius of the cap is $2\arcsin r/2$.
  Using Archimedes' Lemma, we get $r^2 \pi$ for the area of this cap.
  The probability that none of the $n$ points lie in this cap is therefore
  \begin{align}
    \label{eqn:pr-far}
    \Probable{R \geq r} 
      &=  \left( \tfrac{4\pi - \pi r^2}{4\pi} \right)^n
       =  \left(1 - \tfrac{r^2}{4} \right)^n ,
  \end{align}
  in which $R$ is the maximum Euclidean radius for the which the
  cap has no point in its interior.
  This maximum radius is the minimum distance to $N$,
  whose expectation we compute using the formula
  \begin{align}
    \Expected{R} 
       &=  \int_{r=0}^{\infty} \Probable{R \geq r} \diff r 
        =  \int_0^2 \left(1 - \tfrac{r^2}{4}\right)^n \diff r
        =  \int_{-1}^1 \frac{\Vol(\BB^{2n})}{\Vol(\BB^{2n})} (1 - t^2)^n \diff t 
        =  \frac{\Vol(\BB^{2n+1})}{\Vol(\BB^{2n})} ,
  \end{align}
  in which we get the ratio on the right by observing that the $2n$-dimensional volume
  of the slice of $\BB^{2n+1}$ at distance $t$ from the center is
  $\Vol(\BB^{2n})(1-t^2)^n$.
\end{proof}

We recall that the double factorial of an even positive integer is $(2n)!! = 2^n n!$
and that of an odd positive integer is $(2n+1)!! = (2n+1)!/(2n)!!$.
The volumes of the balls are $\Vol(\BB^{2n+1}) = 2^{n+1} \pi^n / (2n+1)!!$
and $\Vol(\BB^{2n}) = \pi^n / n!$.
It follows that the ratio is
\begin{align}
  \Expected{R} &=  \frac{\Vol(\BB^{2n+1})}{\Vol(\BB^{2n})} 
                =  \tfrac{2(2n)!!}{(2n+1)!!}
               \mathop{\thicksim}\limits_{n\to\infty}  \sqrt{\tfrac{\pi}{n}} ,
\end{align}
in which the final formula is obtained using Sterling's Formula
for factorials.
We can repeat the argument from Theorem \ref{thm:minimum_distance}
to get the expected minimum \emph{spherical} distance from $N$,
which we denote $\Phi$.
The probability that this distance exceeds a threshold is
$\Probable{\Phi \geq \phi}
  = (1 - \sin^2 \frac{\phi}{2})^n = \cos^{2n} \frac{\phi}{2}$.
The expected value of the minimum spherical distance is therefore
\begin{align}
  \Expected{\Phi} 
     &=  \int_{\phi=0}^{\pi} \cos^{2n} \tfrac{\phi}{2} \diff \phi 
        =  2 \int_{\phi=0}^{\pi/2} \cos^{2n} \phi \diff \phi
        =  \BetaF{n+\tfrac{1}{2}}{\tfrac{1}{2}}
        =  \frac{\pi (2n)!}{4^n (n!)^2}
        \mathop{\thicksim}\limits_{n\to\infty}  \sqrt{\tfrac{\pi}{n}} .
\end{align}
Similarly, we can get the higher moments of the minimum distance.
Returning to the Euclidean distance, and writing $s = r^2/4$,
we get the density of the distribution of $s$ from \eqref{eqn:pr-far}:
it is the negative of the derivative of $(1-s)^n$,
which is $n (1-s)^{n-1}$.
From this we get the $k$-th power of the minimum distance as
$r^k = 2^k s^{k/2}$:
\begin{align}
  \Expected{R^k}
    &=  \int_{s=0}^1 2^k s^{k/2} \, n (1-s)^{n-1} \diff s
     =  n 2^k \, \BetaF{n}{\tfrac{k}{2}+1}
    \mathop{\thicksim}\limits_{n\to\infty}  \sqrt{\tfrac{2 \pi k^k}{2^k n^k}} .
\end{align}

\bigskip 
\section{Deficiencies}
\label{sec:7-Deficiencies}

Since the random inscribed polytopes approximate the unit $3$-ball,
we compare their measures with that of the ball. 
Letting $\mu$ be a measure that applies to $\BB^3$ and to inscribed polytopes
alike, we call
\begin{align}
  \NDef{\mu(X_n)}  &=  1 - {\mu (X_n)}/{\mu (\BB^3)} 
\end{align}
the corresponding \emph{normalized deficiency}.
Besides the deficiency of a random inscribed polytope,
we consider the deficiency in the ideal regular case,
for what we call the \emph{virtual model}, $\model_n$.
Despite the construction in \cite{akopyan2015hexagonal},
there are no regular simplicial polytopes inscribed in $\Sphere$
other than for $n = 4, 6, 20$ vertices.
We therefore consider the regular spherical triangle of area
$a_n = \tfrac{4 \pi}{2n-4}$, tacitly ignoring the fact that for most $n$,
we cannot decompose the sphere into congruent copies of this triangle.
All three of its angles are equal,
namely $\alpha_n = (a_n + \pi)/3$, by Girard's Theorem.
We are interested in the corresponding Euclidean triangle.
\begin{lemma}[Euclidean Triangle]
  \label{lem:Euclidean_triangle}
  Consider two Euclidean triangles that share their four vertices with 
  two adjacent regular spherical triangles of area $4 \pi / (2n-4)$ each.
  The length of an edge, the area of a triangle, the volume of the tetrahedron
  connecting the Euclidean triangle to the origin, and the angle between
  the two normals are
  \begin{align}
    L_n  &=  \tfrac{2 \sqrt{2 \pi}}{\sqrt[4]{3}} \cdot \sqrt{\tfrac{1}{n}}
           + \tfrac{\sqrt{2 \pi} (18 - 5 \sqrt{3} \pi)}{9 \sqrt[4]{3}}
           \cdot \sqrt{ \tfrac{1}{n^3} } + O \left( \sqrt{\tfrac{1}{n^5}} \right) ,   
      \label{eqn:length-n} \\
    A_n  &=  2 \pi \cdot \tfrac{1}{n}
           + \tfrac{36 \pi - 10 \sqrt{3} \pi^2}{9} \cdot \tfrac{1}{n^2}
           + O( \tfrac{1}{n^3} ) ,
      \label{eqn:area-n} \\
    V_n  &=  \tfrac{2 \pi}{3} \cdot \tfrac{1}{n}
           + \tfrac{4 \pi - 2 \sqrt{3} \pi^2}{3}
           \cdot \tfrac{1}{n^2} + O( \tfrac{1}{n^3} ) ,
      \label{eqn:volume-n} \\
    \vartheta_n  &=  \tfrac{\sqrt{8 \pi} \sqrt[4]{3}}{3} \cdot \sqrt{\tfrac{1}{n}}
                + \tfrac{18 \sqrt{6 \pi} + 5 \pi \sqrt{2 \pi}}
                        {27 \sqrt[4]{3}} \cdot \sqrt{\tfrac{1}{n^3}}
                + O \left( \sqrt{\tfrac{1}{n^5}} \right).
      \label{eqn:angle-n}
  \end{align}
\end{lemma}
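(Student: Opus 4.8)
The plan is to reduce all four quantities to spherical-trigonometric functions of the single vertex angle $\alpha_n = (a_n+\pi)/3$ fixed by Girard's Theorem, and then Taylor-expand in $1/n$. First I set up the regular spherical triangle with its centroid at the north pole and its three vertices at colatitude $\rho_n$ (the spherical circumradius) and longitudes $0,\tfrac{2\pi}{3},\tfrac{4\pi}{3}$. Taking the inner product of two vertices gives the spherical side length $c_n$ through $\cos c_n = 1 - \tfrac{3}{2}\sin^2\rho_n$, and comparing the two great-circle tangent directions at a vertex yields the vertex-angle relation $\cos\alpha_n = \tfrac{\cos c_n}{1+\cos c_n}$, equivalently $\cos c_n = \tfrac{\cos\alpha_n}{1-\cos\alpha_n}$. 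These two identities express $c_n$ and $\rho_n$ explicitly in terms of $\alpha_n$; note that $\alpha_n\to\tfrac\pi3$ as $n\to\infty$, hence $c_n,\rho_n\to 0$, so the triangle shrinks to a point and the spherical quantities converge to their Euclidean counterparts.

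Once $\rho_n$ and $c_n$ are in hand, the four targets follow from elementary geometry of the inscribed triangle. The chord length gives $L_n = 2\sin\tfrac{c_n}{2} = \sqrt{3}\,\sin\rho_n$, and since the Euclidean triangle is equilateral its area is $A_n = \tfrac{\sqrt3}{4}L_n^2$. The plane of the three vertices lies at distance $\cos\rho_n$ from the origin, so the tetrahedron volume is $V_n = \tfrac13 A_n\cos\rho_n$. For the angle between the two normals I would use the observation that the outward normal of the Euclidean facet inscribed in a regular spherical triangle points exactly toward that triangle's spherical centroid. Hence $\vartheta_n$ equals the angular distance between the centroids of the two adjacent spherical triangles; as both lie on the great circle perpendicular to the shared edge at its midpoint, at equal spherical distance $r_n$ (the apothem, i.e.\ the spherical inradius) on opposite sides, we get $\vartheta_n = 2 r_n$. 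The apothem is read off the right spherical triangle formed by the centroid, a vertex and an edge midpoint, giving $\sin r_n = \sin\rho_n\sin\tfrac{\alpha_n}{2}$, with companion relation $\sin\tfrac{c_n}{2} = \tfrac{\sqrt3}{2}\sin\rho_n$. A quick leading-order check confirms the setup: $r_n\approx\rho_n/2$, so $\vartheta_n\approx\rho_n\approx L_n/\sqrt3$, matching the leading coefficients of \eqref{eqn:length-n} and \eqref{eqn:angle-n}.

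It remains to carry out the asymptotic expansions. Writing $\alpha_n = \tfrac\pi3 + \varepsilon_n$ with $\varepsilon_n = \tfrac{a_n}{3} = \tfrac{2\pi}{3(n-2)}$, I would expand $\cos\alpha_n$ about $\tfrac\pi3$, push this through the explicit expressions for $\cos c_n$, $\sin\rho_n$, $\cos\rho_n$ and $\sin r_n$, and finally re-expand $\varepsilon_n$ in powers of $1/n$. Because $L_n$ and $\vartheta_n$ are essentially $\sin\rho_n$ and $2r_n$, and $\sin^2\rho_n$ is an analytic function of $\varepsilon_n$ vanishing to first order, the square root produces the half-integer powers $\sqrt{1/n}$, $\sqrt{1/n^3}$ in \eqref{eqn:length-n} and \eqref{eqn:angle-n}, whereas $A_n$ and $V_n$ remain analytic in $1/n$. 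I expect the main obstacle to be purely bookkeeping: getting the second-order coefficients right forces me to retain the $1/n^2$ term of $\varepsilon_n$ (so the $-2$ in $n-2$ genuinely matters) and the quadratic term of every Taylor step, and the nested square roots and arcsine make hand computation error-prone. Accordingly I would expand each relation once to the required order, compose them, and verify the resulting coefficients with a symbolic algebra system, as is done elsewhere in the paper.
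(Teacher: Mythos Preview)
Your approach is correct and is precisely the ``straightforward but tedious'' computation the paper alludes to but omits; the paper gives no proof at all beyond that remark, so your sketch in fact supplies what is missing. The parametrisation via the circumradius $\rho_n$, the identities $\cos c_n=1-\tfrac32\sin^2\rho_n$ and $\cos\alpha_n=\tfrac{\cos c_n}{1+\cos c_n}$, the closed forms $L_n=\sqrt3\sin\rho_n$, $A_n=\tfrac{\sqrt3}{4}L_n^2$, $V_n=\tfrac13 A_n\cos\rho_n$, and the observation $\vartheta_n=2r_n$ with $\sin r_n=\sin\rho_n\sin\tfrac{\alpha_n}{2}$ are all correct, and your leading-order checks match the stated coefficients; carrying the expansion one order further (keeping the $-2$ in $n-2$) and verifying symbolically, as you propose, is exactly what is needed.
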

\noindent We omit the proof, which is straightforward but tedious.
As mentioned before, a convex polytope all of whose facets are regular triangles
does not exist for most $n$.
We nevertheless define the \emph{total edge length},
the \emph{area}, and the \emph{volume} of the virtual model as
$\Length(\model_n) = (3n-6) L_n$,
$\Area(\model_n) = (2n-4) A_n$,
and $\Vol(\model_n) = (2n-4) V_n$.
To get a similar definition of the \emph{mean width}, we recall it is
$\frac{1}{2 \pi}$ times the mean curvature,
and a convenient formula for the latter is the sum, over all edges,
of half the length times the angle between the outer normals of the two
incident faces:
$\Width(\model_n) = (3n-6) \frac{1}{4 \pi} L_n \vartheta_n$.
We conjecture that the mean width, area, and volume of the virtual model
are beyond the reach of convex inscribed polytopes:
\begin{conjecture}[Upper Bounds]
  \label{conj:upper_bounds}
  Let $X_n$ be the convex hull of $n \geq 4$ points on the unit sphere in~$\RR^3$.
  Then $\Width (X_n) \leq \Width (\model_n)$,
       $\Area (X_n) \leq \Area (\model_n)$,
  and  $\Vol (X_n) \leq \Vol (\model_n)$.
\end{conjecture}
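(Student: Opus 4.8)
The plan is to reduce each of the three inequalities to a \emph{local} optimization over a single facet, combined with a concavity (Jensen) argument that uses the combinatorial invariants supplied by Euler's formula. Throughout I work in general position, so $X_n$ is simplicial, and I write $a_F$ for the area of the geodesic (spherical) triangle that the three vertices of a facet $F$ span on $\Sphere$. Since the radial projection from the origin sends each straight chord to the great-circle arc through its endpoints, these geodesic triangles triangulate $\Sphere$ whenever the origin lies in the interior of $X_n$, so that $\sum_F a_F = \Area(\Sphere) = 4\pi$ and, by Euler's formula, the number of terms is exactly $2n-4$.

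Start with the area. For a prescribed $a$, let $A^{\ast}(a)$ denote the supremum of the Euclidean area of a flat triangle inscribed in $\Sphere$ whose geodesic triangle has spherical area $a$. I would prove two facts about $A^{\ast}$: (i) the supremum is attained by the \emph{equilateral} spherical triangle, so that $A^{\ast}(a_n) = A_n$ with $a_n = 4\pi/(2n-4)$ as in Lemma~\ref{lem:Euclidean_triangle}; and (ii) $A^{\ast}$ is concave on the relevant range. Given these, $\Area(X_n) = \sum_F \Area(F) \le \sum_F A^{\ast}(a_F) \le (2n-4)\, A^{\ast}\!\bigl(\tfrac{1}{2n-4}\sum_F a_F\bigr) = (2n-4)\,A^{\ast}(a_n) = \Area(\model_n)$, where the middle inequality is Jensen's inequality for the concave $A^{\ast}$. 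Fact (i) I would attack by a symmetrization / Lagrange-multiplier argument: fixing the spherical area and perturbing two vertices symmetrically about the perpendicular bisector of the third drives the flat area up, with the only critical configuration being the fully symmetric one. Fact (ii) is a single-variable computation: parametrize the equilateral spherical triangle by its common angle $\alpha$ (with $a = 3\alpha - \pi$ by Girard's Theorem), express $A^{\ast}$ through the Euclidean circumradius, and check the sign of the second derivative.

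The volume is handled by the same template, now with the cone decomposition from the origin: when the origin lies inside $X_n$ one has $\Vol(X_n) = \sum_F \tfrac13 \Area(F)\, h_F$ with $h_F = \sqrt{1 - \rho_F^{2}}$ the distance of the facet plane to the origin, as in Section~\ref{sec:43-MeasureofFacets}. Replacing $A^{\ast}$ by $V^{\ast}(a)$, the supremum of $\tfrac13 \Area(F) h_F$ over triangles of geodesic area $a$, the identical two facts (equilateral optimality and concavity) yield $\Vol(X_n) \le (2n-4)\,V^{\ast}(a_n) = (2n-4)V_n = \Vol(\model_n)$.

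The mean width is the genuinely hard case, and where I expect the main obstacle to lie. Here the natural decomposition $\Width(X_n) = \tfrac{1}{4\pi}\sum_e L_e \vartheta_e$ is indexed by the $3n-6$ \emph{edges}, and the per-edge weight $L_e\vartheta_e$ depends on the exterior dihedral angle $\vartheta_e$, hence on \emph{both} facets meeting at $e$; there is no per-edge analogue of the conserved solid angle $\sum_F a_F = 4\pi$ that drove the Jensen step for area and volume. One route is to pass to projections via Cauchy's formula, $\Width(X_n) = \tfrac1\pi\,\Expected{\mathrm{perimeter}(\proj{u^{\perp}}{X_n})}$ over a uniform random direction $u$, and to bound the perimeter of the inscribed projection polygon by that of a regular inscribed polygon; but the number of silhouette vertices fluctuates, and recombining these planar bounds into the three-dimensional regular model reintroduces exactly the edge-coupling difficulty. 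Besides this, two further gaps must be closed to make even the area and volume arguments unconditional: establishing concavity of $A^{\ast}$ and $V^{\ast}$ on the full range of attainable $a_F$ (not merely near $a_n$), and removing the standing assumption that the origin lies in the interior of $X_n$, since for non-star-shaped configurations the geodesic triangles overcover $\Sphere$, the volume decomposition acquires the signs recorded in Section~\ref{sec:43-MeasureofFacets}, and the clean identity $\sum_F a_F = 4\pi$ fails.
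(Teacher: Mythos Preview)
This statement is presented in the paper as a \emph{conjecture}, not a theorem; the paper gives no proof and explicitly lists it as an open problem (item~6 in Section~\ref{sec:9-Discussion}). There is therefore no proof in the paper against which to compare your proposal.

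As an attack on the open problem, your per-facet Jensen strategy for the area and the volume is a natural first idea, and you are candid about its gaps. Two of those gaps are substantive. First, claim~(i)---that among all spherical triangles of prescribed spherical area the equilateral one maximizes the Euclidean area (respectively the cone volume)---is itself a nontrivial extremal statement; the ``perturb two vertices symmetrically'' sketch does not obviously preserve the spherical-area constraint while increasing the flat area, and you would need a genuine constrained-variation or symmetrization argument here, not just an appeal to symmetry of the critical point. Second, concavity of $A^{\ast}$ and $V^{\ast}$ must hold on the whole interval of attainable $a_F$; you can restrict to $a_F\in(0,2\pi)$ when the origin lies in the interior of $X_n$ (each radial facet triangle sits inside the spherical cap cut off by the facet plane), but the single-variable second-derivative check you propose still has to be carried out on all of $(0,2\pi)$, and near the endpoints the parametrization by the angle $\alpha$ becomes delicate because the branch of $\arctan$ switches. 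The ``origin outside $X_n$'' case is a genuine further obstruction to the template, as you note: the signed decomposition of Section~\ref{sec:43-MeasureofFacets} no longer gives a clean conserved quantity to feed into Jensen.

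For the mean width your proposal is, in effect, an explanation of why the same template \emph{fails}: the per-edge contribution $L_e\vartheta_e$ couples adjacent facets, and neither the Cauchy projection route nor any edge-wise conserved quantity is identified. So for this case there is no proof sketch at all, only a diagnosis of the difficulty. That is consistent with the paper's decision to leave the whole statement as a conjecture.
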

Compare the inequalities in Conjecture \ref{conj:upper_bounds}
with \cite[Section 9]{FeTo64}.
The total edge length permits no such inequality.

\medskip
It is of some interest to probe how close or far from the virtual model
the random inscribed polytopes are.
To this end, we take a look at the ratio of deficiencies.
We will see shortly that the ratios of the mean width, the area, and the volume
converge to $1.984\ldots$, $1.984\ldots$, and $2.205\ldots$, respectively.
For the total edge length, we do not have deficiencies but we can compare
the lengths directly.
We get the expected normalized mean width deficiency of a random inscribed polytope
from \eqref{eqn:Thm-EIV-I1}, compute the normalized mean width of the virtual model
using \eqref{eqn:length-n}, \eqref{eqn:angle-n}, and look at the ratio to compare:
\begin{align}
  \Expected{\NDef{\Width(X_n)}}
    &=  1 - \frac{\Expected{\Width(X_n)}}{\Width(\BB^3)}
     =  1 - \tfrac{n-1}{n+1}
     =  2 \cdot \tfrac{1}{n} + O\left(\tfrac{1}{n^2}\right),        \\
  \NDef{\Width(\model_n)}
    &=  1 - \frac{\Width(\model_n)}{\Width(\BB^3)}
     =  \tfrac{5 \sqrt{3} \pi}{27} \cdot \tfrac{1}{n}
      + O\left(\tfrac{1}{n^2}\right),       \\
  \frac{\Expected{\NDef{\Width(X_n)}}}{\NDef{\Width(\model_n)}}
    &=  \tfrac{18 \sqrt{3}}{5 \pi} + O \left( \tfrac{1}{n} \right)
        \underset{n \to \infty}{\longrightarrow}  1.984\ldots.
\end{align}
We repeat the comparison for the area, using \eqref{eqn:Thm-EIV-I2}
and \eqref{eqn:area-n} to compute the normalized deficiencies:
\begin{align}
  \Expected{\NDef{\Area(X_n)}}
    &=  1 - \frac{\Expected{\Area(X_n)}}{\Area(\BB^3)}
     =  1 - \tfrac{(n-1)(n-2)}{(n+1)(n+2)}
     =  6 \cdot \tfrac{1}{n} + O\left(\tfrac{1}{n^2}\right),        \\
  \NDef{\Area(\model_n)}
    &=  1 - \frac{\Area(\model_n)}{\Area(\BB^3)}
     =  \tfrac{5 \sqrt{3} \pi}{9} \cdot \tfrac{1}{n}
        + O\left(\tfrac{1}{n^2}\right) ,                            \\
  \frac{\Expected{\NDef{\Area(X_n)}}}{\NDef{\Area(\model_n)}}
    &=  \tfrac{18 \sqrt{3}}{5 \pi} + O\left(\tfrac{1}{n}\right)
        \underset{n \to \infty}{\longrightarrow}  1.984\ldots.
\end{align}
We repeat the comparison for the volume, using \eqref{eqn:Thm-EIV-I3}
and \eqref{eqn:volume-n} to compute the normalized deficiencies:
\begin{align}
  \Expected{\NDef{\Vol(X_n)}}
    &=  1 - \frac{\Expected{\Vol(X_n)}}{\Vol(\BB^3)}
     =  1 - \tfrac{(n-1)(n-2)(n-3)}{(n+1)(n+2)(n+3)}
     =  12 \cdot \tfrac{1}{n} + O\left(\tfrac{1}{n^2}\right) ,         \\
  \NDef{\Vol(\model_n)}       
    &=  1 - \frac{\Vol(\model_n)}{\Vol(\BB^3)}
     =  \sqrt{3} \pi \cdot \tfrac{1}{n}
        + O\left(\tfrac{1}{n^2}\right) ,                                 \\
  \frac{\Expected{\NDef{\Vol(X_n)}}}{\NDef{\Vol(\model_n)}}
    &=  \tfrac{4 \sqrt{3}}{\pi} + O\left(\tfrac{1}{n}\right)
        \underset{n \to \infty}{\longrightarrow}  2.205\ldots.
\end{align}
We finally consider the total edge length.
Since $\Length(\BB^3)$ is not defined, we are not able to compute any deficiency.
Nevertheless, we can compare the total edge length of a random inscribed polytope,
which we get from \eqref{eqn:Thm-TEL-I},
with that of the model,
which we compute with \eqref{eqn:length-n}:
\begin{align}
  \frac{\Expected{\Length(X_n)}}{\sqrt{n}}
    &=  \tfrac{64}{3 \sqrt{\pi}} + O\left(\tfrac{1}{n}\right)
        \underset{n \to \infty}{\longrightarrow} 12.036\ldots ,     \\
  \frac{\Length(\model_n)}{\sqrt{n}}
    &=  \tfrac{6 \sqrt{2 \pi}}{\sqrt[4]{3}} + O\left(\tfrac{1}{n}\right)
        \underset{n \to \infty}{\longrightarrow} 11.427\ldots .
\end{align}
The ratio converges to $1.053\ldots$.
The fact that the model has smaller total edge length than the random
inscribed polytope suggests a nearby local minimum.
It can of course not be a global minimum because there are inscribed
polytopes with arbitrarily small total edge length for any number
of vertices.

\bigskip 
\section{Ellipsoid with Homeoid Density}
\label{sec:8-Ellipsoid}

In this section, we extend the expressions for the intrinsic volumes and total edge length
from the sphere to the ellipsoid.
On the latter, we consider the \emph{homeoid density}, which is the
push-forward of the uniform measure on $\Sphere$ under the linear transform,
$\Linear$, that sends the sphere to the ellipsoid.
It can also be defined as the limit of the uniform measure in the layer
between the ellipsoid and its concentrically scaled copy;
see \cite[Section 9.2]{Arn04}.
\begin{figure}[ht]
  \vspace{0.2in}
  \includegraphics{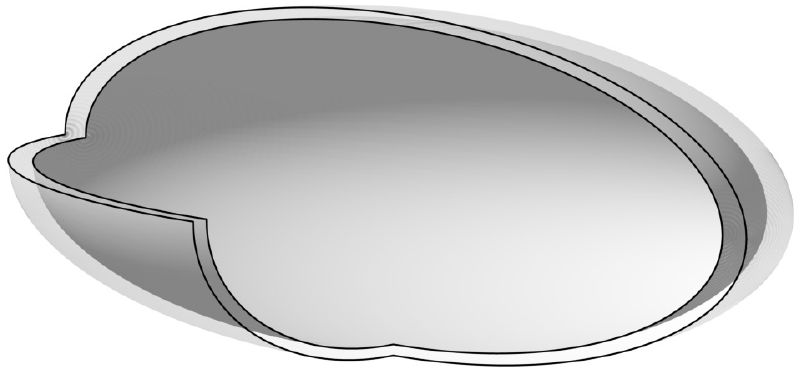}
  \caption{\footnotesize{Five eighths of the solid ellipsoid,
    and the layer between its boundary and the boundary of a scaled copy.}}
  \label{fig:ellipsoid}
\end{figure}
It follows from work of Newton and Ivory that in a charged metal shell,
electrons distribute according to this homeoid density.
This is the only distribution in which the electric field inside the shell vanishes and,
in addition, the level sets of the potential energy outside the shell are
confocal ellipsoids.

We write $\EE^3$ for the solid ellipsoid and $\partial \EE^3$ for its boundary;
that is: $\EE^3 = \Linear (\BB^3)$ and $\partial \EE^3 = \Linear (\Sphere)$.
Letting $p \geq q \geq r$ be the half-lengths of its axes,
we note that the volume of $\EE^3$ is $\tfrac{4 \pi}{3} \, p q r$.
There is no such simple expression for the area, but there are incomplete
elliptic functions of the first and second kind, $E$ and $F$, such that
\begin{align}
  \Area(\EE^3)  &=  2 \pi \left[ r^2 + \tfrac{q r^2}{\sqrt{p^2-r^2}}
    \, F \left( \! \sqrt{1-\tfrac{r^2}{p^2}};
                   \tfrac{p}{q} \sqrt{\tfrac{q^2-r^2}{p^2-r^2}} \right)
    + q \sqrt{p^2-r^2}
    \, E \left( \! \sqrt{1-\tfrac{r^2}{p^2}};
                  \tfrac{p}{q} \sqrt{\tfrac{q^2-r^2}{p^2-r^2} } \right) \right].
  \label{eq:surface area}
\end{align}
To get a formula for the mean width, we use a well known relation between $\EE^3$
and its dual ellipsoid, denoted $\DD^3$,
whose half-lengths are $\tfrac{1}{p}, \tfrac{1}{q}, \tfrac{1}{r}$, namely
$\Width(\EE^3) = \tfrac{pqr}{2\pi} \cdot \Area(\DD^3)$.
We refer to \cite[Prop. 4.8]{kabluchko2016intrinsic} for a formulation
of this relation and to \cite{petrov2019uniqueness} for an application in $\RR^3$.
We now generalize the theorems from Sections \ref{sec:5-IntrinsicVolume}
and \ref{sec:6-LengthandDistance} to state how the convex hull of a random
inscribed polytope approximates the intrinsic volumes of the ellipsoid.
\begin{theorem}[Inscribed in Ellipsoid]
  \label{thm:inscribed_in_ellipsoid}
  Let $n \geq 4$ points be chosen independently according to the homeoid distribution
  on $\partial \EE^3 \subseteq \RR^3$,
  and let $Y_n$ be their convex hull.
  The intrinsic volumes satisfy
  \begin{align}
    \Expected{\Width(Y_n)}  &=  \Width(\EE^3) \cdot \tfrac{n-1}{n+1},
      \label{eqn:Thm-EIV-III1} \\
    \Expected{\Area(Y_n)}   &=  \;\Area(\EE^3)
                                \cdot \tfrac{n-1}{n+1} \tfrac{n-2}{n+2},
      \label{eqn:Thm-EIV-III2} \\
    \Expected{\Vol(Y_n)}    &=  \;\Vol(\EE^3)
                                \cdot \tfrac{n-1}{n+1} \tfrac{n-2}{n+2} \tfrac{n-3}{n+3},
      \label{eqn:Thm-EIV-III3} 
  \end{align}
  and the expected total edge length is
  \begin{align}
    \Expected{\Length(Y_n)} &=  \Width(\EE^3)
                  \cdot \left[  \tfrac{32}{3 \sqrt{\pi}} \sqrt{n} \cdot (1+o(1)) \right] .
      \label{eqn:Thm-EIV-III4}
  \end{align}
\end{theorem}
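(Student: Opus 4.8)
The plan is to use the defining property of the homeoid density: it is the push-forward of the uniform measure on $\Sphere$ under the linear map $\Linear$. Hence if $x_1,\dots,x_n$ are uniform on $\Sphere$ and $y_i = \Linear x_i$, then $Y_n = \conv{\{y_i\}} = \Linear\,\conv{\{x_i\}} = \Linear X_n$, so $Y_n$ has the law of $\Linear X_n$ with $X_n$ the spherical random polytope of Sections \ref{sec:5-IntrinsicVolume}--\ref{sec:6-LengthandDistance}. Since all four functionals are rotation invariant and any ellipsoid can be rotated to axis-aligned position, I may assume $\Linear = \Linear^T = \mathrm{diag}(p,q,r)$. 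Everything then reduces to recording how each functional transforms under $\Linear$ and averaging the resulting anisotropic Jacobian against the \emph{rotation-invariant} expected directional content of $X_n$.

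The volume is immediate: $\Vol(Y_n) = |\det\Linear|\,\Vol(X_n) = pqr\,\Vol(X_n)$, and since $\Vol(\EE^3) = pqr\,\Vol(\BB^3)$, dividing by the ball and invoking \eqref{eqn:Thm-EIV-I3} gives \eqref{eqn:Thm-EIV-III3}. For the mean width I would write $\Width(K) = \tfrac{1}{2\pi}\int_{\Sphere} h_K(u)\diff\sigma(u)$ and use $h_{\Linear X_n}(u) = h_{X_n}(\Linear^T u) = |\Linear^T u|\, h_{X_n}(\Linear^T u/|\Linear^T u|)$. By rotational invariance of the law of $X_n$, the expectation $\Expected{h_{X_n}(v)}$ is independent of the unit vector $v$ and equals $\tfrac12\Expected{\Width(X_n)} = \tfrac{n-1}{n+1}$ by \eqref{eqn:Thm-EIV-I1}; pulling this constant out leaves $\tfrac{1}{2\pi}\int_{\Sphere}|\Linear^T u|\diff\sigma(u)$, which is exactly $\Width(\EE^3)$, proving \eqref{eqn:Thm-EIV-III1}. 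The area is the same idea one dimension up: a facet of $X_n$ with unit normal $\nu$ has its area scaled by $pqr\,|\Linear^{-T}\nu|$ under $\Linear$, so $\Area(Y_n) = \int_{\Sphere} pqr\,|\Linear^{-T}\nu|\,S(X_n,\diff\nu)$ in terms of the surface-area measure $S(X_n,\cdot)$ on $\Sphere$. Its expectation is rotation invariant, hence equal to the uniform measure scaled to total mass $\Expected{\Area(X_n)}$, i.e. $\tfrac{\Expected{\Area(X_n)}}{4\pi}\,\sigma$; integrating the weight against it reproduces $\Area(\EE^3) = pqr\int_{\Sphere}|\Linear^{-T}\nu|\diff\sigma$ and, with \eqref{eqn:Thm-EIV-I2}, yields \eqref{eqn:Thm-EIV-III2}.

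For the total edge length the same transformation acts edgewise: an edge of $X_n$ with unit direction $\tau$ is stretched by $|\Linear\tau|$, so $\Length(Y_n) = \sum_{e}|\Linear\tau_e|\,\Length(e)$ over the edges of $X_n$. The new feature is that the weight $|\Linear\tau|$ is not constant and cannot be factored out exactly; instead I would argue that as $n\to\infty$ the edges become short and nearly tangent to $\Sphere$, so their directions equidistribute to the (rotation-invariant, hence uniform) law on $\Sphere$. Replacing the weight by its isotropic average $\tfrac{1}{4\pi}\int_{\Sphere}|\Linear\tau|\diff\sigma(\tau) = \tfrac12\Width(\EE^3)$ (the last equality because $\Linear$ and $\Linear^T$ share singular values) and multiplying by the spherical asymptotic $\Expected{\Length(X_n)} = \tfrac{64}{3\sqrt{\pi}}\sqrt{n}\,(1+o(1))$ of Theorem \ref{thm:total_edge_length} produces $\Width(\EE^3)\cdot\tfrac{32}{3\sqrt{\pi}}\sqrt{n}\,(1+o(1))$, which is \eqref{eqn:Thm-EIV-III4}.

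The hard part will be precisely this last decoupling. For the intrinsic volumes everything is exact, because rotational invariance of the expected support function and surface-area measure lets the anisotropic Jacobian integrate out cleanly; the edge-length identity, by contrast, is only asymptotic, and one must justify that the joint distribution of edge direction and edge length factorizes to leading order, so that the direction-dependent stretch may be replaced by its average without disturbing the $\sqrt{n}$ term. I expect this to follow from the localization of edges near single points of $\Sphere$ (where tangent directions are uniform) together with the concentration already underlying Theorem \ref{thm:total_edge_length}, but it is the one step that requires genuine estimation rather than bookkeeping.
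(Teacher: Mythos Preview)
Your treatment of the three intrinsic volumes is correct and in fact somewhat cleaner than the paper's. The paper proves \eqref{eqn:Thm-EIV-III1} and \eqref{eqn:Thm-EIV-III2} via Crofton's formula, expressing the width and area as averages of projections onto lines and planes and then pushing each projection back through $\Linear^{-1}$. You instead work directly with the support function and the surface-area measure and exploit that their \emph{expectations} are rotation invariant, hence constant (respectively uniform) on $\Sphere$; the anisotropic Jacobian then integrates out to $\Width(\EE^3)$ or $\Area(\EE^3)$ on the nose. Both routes are exact, and yours needs no integral-geometric machinery beyond the identities $h_{\EE^3}(u)=|\Linear^T u|$ and the cofactor formula for facet areas.

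The gap is in your edge-length argument, and it is a gap of perception rather than of technique: the decoupling you call ``the hard part'' is not asymptotic at all --- it is exact, for precisely the reason your own width and area arguments work. Define the length-weighted direction measure $L_{X_n}(A)=\sum_{e}\ell(e)\,\One[\tau_e\in A]$ on $\Sphere$; then $\Length(Y_n)=\int_{\Sphere}|\Linear\tau|\,L_{X_n}(\diff\tau)$. The law of $X_n$ is rotation invariant, so the deterministic measure $\Expected{L_{X_n}}$ is rotation invariant and hence equals $\tfrac{\Expected{\Length(X_n)}}{4\pi}\,\sigma$. Taking expectations and using Fubini gives the \emph{exact} identity
\[
  \Expected{\Length(Y_n)}
  \;=\; \frac{\Expected{\Length(X_n)}}{4\pi}\int_{\Sphere}|\Linear\tau|\diff\sigma(\tau)
  \;=\; \tfrac{1}{2}\,\Width(\EE^3)\cdot\Expected{\Length(X_n)},
\]
the last step because $\tfrac{1}{4\pi}\int|\Linear\tau|\diff\sigma=\tfrac{1}{4\pi}\int h_{\EE^3}(\tau)\diff\sigma=\tfrac{1}{2}\Width(\EE^3)$ (here $\Linear=\Linear^T$). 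This is exactly what the paper asserts when it says ``the directions of the edges of $X_n$ are indeed uniformly distributed'' and concludes that $\Expected{\Length(Y_n)}=\tfrac{1}{2}\Width(\EE^3)\,\Expected{\Length(X_n)}$. No equidistribution of short edges, no localization, no estimation is needed; the $o(1)$ in \eqref{eqn:Thm-EIV-III4} is inherited solely from the asymptotic form of $\Expected{\Length(X_n)}$ in \eqref{eqn:Thm-TEL-I}, not from the transfer to the ellipsoid.
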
	
\begin{proof}
  We first prove the relations for the intrinsic volumes,
  \eqref{eqn:Thm-EIV-III1}, \eqref{eqn:Thm-EIV-III2}, and \eqref{eqn:Thm-EIV-III3}.
  For the volume, the extension from $\Sphere$ to $\partial \EE^3$ is straightforward.
  Since linear transformations preserve volume ratios, we have
  $\Vol(Y_n) / \Vol(\EE^3) = \Vol(X_n) / \Vol(\BB^3)$,
  in which we write $X_n = \Linear^{-1} (Y_n)$.
  The expectation of $\Vol(Y_n)$ is therefore $\Vol(\EE^3)$ times the
  expectation of $\Vol(X_n) / \Vol(\BB^3)$.
  The image of the homeoid density under~$\Linear^{-1}$ is
  the uniform measure on $\Sphere$.
  so we get \eqref{eqn:Thm-EIV-III3} from \eqref{eqn:Thm-EIV-I3}.

  For the area, we use Crofton's formula from integral geometry, which says
  that $\Area(Y_n)$ is four times the average area of the orthogonal projection
  of $Y_n$ onto a random plane.
  To state this more formally, let $\Grass{2}{3}$ be the Grassmannian of
  $2$-dimensional planes passing through the origin in~$\RR^3$,
  noting that it is isomorphic to the $2$-dimensional projective plane.
  Letting $\proj{P}{Y_n}$ be the orthogonal projection of the polytope
  onto $P \in \Grass{2}{3}$, Crofton's formula for the area is
  \begin{align}
    \Area(Y_n)  &=  \tfrac{4}{2 \pi} \int_{P \in \Grass{2}{3}}
                                     \Area(\proj{P}{Y_n}) \diff P .
    \label{eqn:Crofton-area}
  \end{align}
  The area of $\proj{P}{Y_n}$ is really the measure of lines orthogonal to $P$
  that intersect $Y_n$.
  Every such line $L \perp P$ corresponds to a line $\Linear^{-1}(L)$
  that intersects $\Linear^{-1} (Y_n)$.
  Similarly, every line $L \perp P$ that intersects $\EE^3$
  corresponds to a line $\Linear^{-1}(L)$ that intersects $\BB^3$.
  Hence,
  \begin{align}
    \frac{\Area(\proj{P}{Y_n})}{\Area(\proj{P}{\EE^3})}
      &=  \frac{\Area(\proj{Q}{X_n})}{\Area(\proj{Q}{\BB^3})} ,
    \label{eqn:ratio-area}
  \end{align}
  in which $X_n = \Linear^{-1}(Y_n)$ and $Q$ is the plane normal to the lines
  $\Linear^{-1}(L)$.
  Fixing $P$, $\Expected{\Area(\proj{P}{Y_n})}$ is
  therefore $\Area(\proj{P}{\EE^3}) / \pi$ times
  $\Expected{\Area(\proj{Q}{X_n})}$.
  The latter is independent of $Q$ and by Crofton's formula
  equal to $\tfrac{1}{4} \, \Expected{\Area(X_n)}$.
  Hence,
  \begin{align}
    \Expected{\Area(Y_n)}  &=  \tfrac{4}{2 \pi} \int_{P \in \Grass{2}{3}}
                               \Expected{\Area(\proj{P}{Y_n})} \diff P            \\
                           &=  \tfrac{4}{2 \pi^2} \int_{P \in \Grass{2}{3}}
                               \Area(\proj{P}{\EE^3})
                               \cdot \Expected{\Area(\proj{Q}{X_n})} \diff P   \\
                           &=  \left[ \tfrac{4}{2 \pi} \int_{P \in \Grass{2}{3}}
                                 \Area(\proj{P}{\EE^3}) \diff P \right]
                               \cdot \tfrac{1}{4 \pi} \Expected{\Area(X_n)} .
    \label{eqn:ellipsoid-area}
  \end{align}
  By Crofton's formula, the first factor in \eqref{eqn:ellipsoid-area}
  is $\Area(\EE^3)$, and by \eqref{eqn:Thm-EIV-I2},
  the second factor is $\tfrac{n-1}{n+1} \tfrac{n-2}{n+2}$,
  which implies the claimed formula for area.
  The proof for the mean width is similar and thus omitted.

  We second prove the relation for the total edge length, \eqref{eqn:Thm-EIV-III4}.
  To that end, we show that for any vector $x \in \Sphere$,
  the length of $\Linear (x)$ is half the length of the projection of $\EE^3$
  onto the line defined by~$x$.
  This implies that the average length of $\Linear (x)$ --- with $x$
  chosen uniformly at random on $\Sphere$ --- is half the mean width of $\EE^3$.
  The directions of the edges of $X_n = \Linear^{-1} (Y_n)$
  are indeed uniformly distributed.
  Therefore, the expected total edge length of $Y_n$ is
  $\tfrac{1}{2} \Width(\EE^3)$ times the expected total edge length of $X_n$,
  and we get \eqref{eqn:Thm-EIV-III4} from \eqref{eqn:Thm-TEL-I}.
  To show the relation between $\Linear(x)$ and the projection of $\EE^3$,
  we assume that the axes of $\EE^3$ are aligned with the coordinate axes of $\RR^3$.
  Equivalently, the linear map that maps $\BB^3$ to $\EE^3$ is represented
  by the diagonal matrix with entries $p, q, r$ along its diagonal.
  The dual ellipsoid, $\DD^3$, is obtained by applying the inverse matrix.
  Equivalently, the points of $\partial \DD^3$ satisfy
  $p^2 y_1^2 + q^2 y_2^2 + r^2 y_3^2 = 1$.
  Let $x = (x_1, x_2, x_3)$ be a unit vector,
  and set $y = (y_1, y_2, y_3)$ with
  $y_i = x_i / (p^2 x_1^2 + q^2 x_2^2 + r^2 x_3^2)^{1/2}$, for $i = 1, 2, 3$.
  By construction, $y$ belongs to $\partial \DD^3$, it is parallel to $x$,
  and its length is
  \begin{align}
    \| y \|  &=  \sqrt{y_1^2+y_2^2+y_3^2}
              =  \tfrac{1}{\sqrt{p^2 x_1^2 + q^2 x_2^2 + r^2 x_3^2}}
              =  \frac{1}{\|\Linear (x)\|}.
  \end{align}
  Since $\DD^3$ is dual to $\EE^3$, this length is one over the half-length
  of the orthogonal projection of $\EE^3$ on the line defined by $x$, as required.
\end{proof}

\medskip
The same arguments work for polytopes generated by a Poisson point process,
thus generalizing Theorems \ref{thm:intrinsic_volumes_III}
and \ref{thm:total_edge_length}
to the case of an ellipsoid with homeoid density.

\bigskip 
\section{Discussion}
\label{sec:9-Discussion}

By focusing on random polytopes that are inscribed into the unit sphere
in $\RR^3$, we find surprisingly elementary proofs for a number
of their stochastic properties.
As an example, we mention that combinatorial arguments together with
Archimedes' Lemma and Crofton's Formula suffice to compute the expected
mean width, area, and volume as functions of the number of vertices.
We mention a number of open questions:
\medskip
\begin{enumerate}
  \item[1.] Is there an elementary explanation for Lemma \ref{lem:shape_vs_size},
    namely that the shape and the size of a random inscribed simplex are independent?
  \item[2.] Are there intuitive geometric reasons for the strikingly simple
    formulas for the intrinsic volumes highlighted in the Introduction?
    Can we generalize the formulas to higher dimensions without losing their appeal?
  \item[3.] What is the meaning of the constant in the expression for the
    total edge length of a random Poisson polytope?
    What is the meaning of the modified Bessel functions appearing in the expressions?
    Can we get a simpler expression for the total edge length in the uniform case?
  \item[4.] Investigate the surprisingly tight correlation between the intrinsic
    volumes of the random inscribed polytopes illustrated in
    Figure \ref{fig:moment-curve}.
  \item[5.] Can we say something about the distributions of the normalized
    intrinsic volume deficiencies?
    The distributions shown in Figure \ref{fig:distributions} seem to be asymmetric,
    growing slower than they decay.
  \item[6.] Prove Conjecture \ref{conj:upper_bounds} about the extremal properties
    of the virtual model.
    Is there a natural optimization criterion based on the total edge length
    that favors inscribed polytopes whose vertices are well spread and whose
    total edge length is on the order of $\sqrt{n}$?
  \item[7.] What is the distribution of vertices of $X_n$ that have degree $k$?
    Asymptotic formulas but no closed-form expressions for Delaunay mosaics
    in $\RR^2$ can be found in \cite{Cal03,Hil05}.
    Are their results also valid for polytopes inscribed in $\Sphere$?
\end{enumerate}

\bigskip
\subsection*{Acknowledgements}
\footnotesize{We thank Dmitry Zaporozhets for directing us to reference \cite{Kab17}
and Anton Mellit for a useful discussion on Bessel functions.}

\bibliographystyle{abbrvurl}
\bibliography{../triangle}
\bigskip

\end{document}